\begin{document}

\title[Curvature properties of two Naveira classes]
{Curvature properties of two Naveira classes of Riemannian product
manifolds}

\author[D. Gribacheva]{Dobrinka Gribacheva}


\frenchspacing

\newcommand{\ie}{i.e. }
\newcommand{\X}{\mathfrak{X}}
\newcommand{\W}{\mathcal{W}}
\newcommand{\F}{\mathcal{F}}
\newcommand{\T}{\mathcal{T}}
\newcommand{\LL}{\mathcal{L}}
\newcommand{\TT}{\mathfrak{T}}
\newcommand{\M}{(M,\f,\xi,\eta,g)}
\newcommand{\Lf}{(G,\f,\xi,\eta,g)}
\newcommand{\R}{\mathbb{R}}
\newcommand{\s}{\mathfrak{S}}
\newcommand{\n}{\nabla}
\newcommand{\tr}{{\rm tr}}
\newcommand{\Div}{{\rm div}}
\newcommand{\nn}{\tilde{\nabla}}
\newcommand{\tg}{\tilde{g}}
\newcommand{\f}{\varphi}
\newcommand{\D}{{\rm d}}
\newcommand{\id}{{\rm id}}
\newcommand{\al}{\alpha}
\newcommand{\bt}{\beta}
\newcommand{\gm}{\gamma}
\newcommand{\dt}{\delta}
\newcommand{\lm}{\lambda}
\newcommand{\ta}{\theta}
\newcommand{\om}{\omega}
\newcommand{\Om}{\Omega}
\newcommand{\ea}{\varepsilon_\alpha}
\newcommand{\eb}{\varepsilon_\beta}
\newcommand{\eg}{\varepsilon_\gamma}
\newcommand{\sx}{\mathop{\mathfrak{S}}\limits_{x,y,z}}
\newcommand{\norm}[1]{\left\Vert#1\right\Vert ^2}
\newcommand{\nf}{\norm{\n \f}}
\newcommand{\Span}{\mathrm{span}}
\newcommand{\grad}{\mathrm{grad}}
\newcommand{\thmref}[1]{The\-o\-rem~\ref{#1}}
\newcommand{\propref}[1]{Pro\-po\-si\-ti\-on~\ref{#1}}
\newcommand{\secref}[1]{\S\ref{#1}}
\newcommand{\lemref}[1]{Lem\-ma~\ref{#1}}
\newcommand{\dfnref}[1]{De\-fi\-ni\-ti\-on~\ref{#1}}
\newcommand{\corref}[1]{Corollary~\ref{#1}}



\numberwithin{equation}{section}
\newtheorem{thm}{Theorem}[section]
\newtheorem{lem}[thm]{Lemma}
\newtheorem{prop}[thm]{Proposition}
\newtheorem{cor}[thm]{Corollary}

\theoremstyle{definition}
\newtheorem{defn}{Definition}[section]

\hyphenation{Her-mi-ti-an ma-ni-fold ah-ler-ian}




\begin{abstract}
The main aim of the present work is to obtain some curvature
properties of the manifolds from two classes of Riemannian product
manifolds. These classes are two basic classes from Naveira
classification of Riemannian almost product manifolds.
\end{abstract}

\keywords{Riemannian almost product manifold; Riemannian metric;
almost product structure; curvature tensor; Riemannian P-tensor.}

\subjclass[2000]{53C15, 53C25.}

\maketitle

\begin{center}
\end{center}

\section*{Introduction}

A Riemannian almost product manifold $(M, P, g)$ is a
differentiable manifold $M$ with an almost product structure $P$
and a Riemannian metric $g$ such that $P^2x = x$ and $g(Px, Py) =
g(x, y)$ for any tangent vectors $x$ and $y$.

K. Yano initiated in \cite{Ya} the study of Riemannian almost
product manifolds. A.M. Naveira gave in \cite{Na} a classification
of these manifolds with respect to the covariant derivative $\n
P$, where $\n$ is the Levi-Civita connection of $g$. This
classification is very similar to the Gray-Hervella classification
in \cite{GrHe} of almost Hermitian manifolds. In \cite{StaGri} M.
Staikova and K. Gribachev have obtained a classification of the
Riemannian almost product manifolds for which $\tr P = 0$. In this
case the manifold is even-dimensional. The basic class $\W_1$ from
the Staikova-Gribachev classification is the class of conformal
Riemannian $P$-manifolds or shortly $\W_1$-manifolds. It is an
analogue of the class of conformal K\"ahler manifolds in almost
Hermitian geometry. It is valid $\W_1
=\overline{\W}_3\oplus\overline{\W}_6$, where  $\overline{\W}_3$
and $\overline{\W}_6$ are basic classes from the Naveira
classification. In some sense these manifolds have dual
geometries.

The main aim in the present work\footnote{Partially supported by
project NI11-FMI-004 of the Scientific Research Fund, Paisii
Hilendarski University of Plovdiv, Bulgaria} is to obtain some
curvature properties of the manifolds from the Naveira classes
$\overline{\W}_3$ and $\overline{\W}_6$ with respect to $\n$.

The paper is organized as follows. In Sec. 1 we give necessary
facts about Riemannian almost product manifolds, the classes
$\W_1$, $\overline{\W}_3$, $\overline{\W}_6$ and the notion a
Riemannian $P$-tensor on a Riemannian almost product manifold,
which is an analogue of the notion of a K\"ahler tensor in
Hermitian geometry. In Sec. 2 we obtain curvature properties of
the $\overline{\W}_3$-manifolds with respect to $\n$. Using the
same methods as in Sec. 2, we obtain in Sec. 3 for the analogous
curvature properties of the  $\overline{\W}_6$-manifolds same or
similar algebraic expressions.

\section{Preliminaries}

Let $(M,P,g)$ be a \emph{Riemannian almost product manifold}, \ie
a differentiable manifold $M$ with a tensor field $P$ of type
$(1,1)$ and a Riemannian metric $g$ such that $P^2x=x$,
$g(Px,Py)=g(x,y)$ for any $x$, $y$ of the algebra $\X(M)$ of the
smooth vector fields on $M$. Further $x,y,z,u,w$ will stand for
arbitrary elements of $\X(M)$ or vectors in the tangent space
$T_cM$ at $c\in M$.

In this work we consider manifolds $(M, P, g)$ with $\tr{P}=0$. In
this case $M$ is an even-dimensional manifold. We assume that
$\dim{M}=2n$.

In \cite{Na} A.M.~Naveira gives a classification of Riemannian
almost pro\-duct manifolds with respect to the tensor $F$ of type
(0,3), defined by
\begin{equation}\label{1}
F(x,y,z)=g\left(\left(\nabla_x P\right)y,z\right),
\end{equation}
where $\n$ is
the Levi-Civita connection of $g$. The tensor $F$ has the
properties:
\begin{equation*}
    F(x,y,z)=F(x,z,y)=-F(x,Py,Pz),\quad
    F(x,y,Pz)=-F(x,Py,z).
\end{equation*}

Using the Naveira classification, in \cite{StaGri} M.~Staikova and
K.~Gribachev give a classification of Riemannian almost product
manifolds $(M,P,g)$ with $\tr P=0$. The basic classes of this
classification are $\W_1$, $\W_2$ and $\W_3$. Their intersection
is the class $\W_0$ of the \emph{Riemannian $P$-manifolds}
(\cite{Sta}), determined by the condition $F=0$. This class is an
analogue of the class of K\"ahler manifolds in the geometry of
almost Hermitian manifolds.

The class $\W_1$ from the Staikova-Gribachev classification
contains the manifolds which are locally conformal equivalent to
Riemannian $P$-manifolds. This class plays a similar role of the
role of the class of the conformal K\"ahler manifolds in almost
Hermitian geometry. We will say that a manifold from the class
$\W_1$ is a \emph{$\W_1$-manifold}.

The characteristic condition for the class $\W_1$ is the following
\begin{equation*}
\begin{array}{l}
\W_1: F(x,y,z)=\frac{1}{2n}\big\{ g(x,y)\ta (z)-g(x,Py)\ta (Pz)
 \big.\\[4pt]
 \phantom{\W_1: F(x,y,z)=\frac{1}{2n}} +g(x,z)\ta (y)-g(x,Pz)\ta (Py)\big\},
\end{array}
\end{equation*}
where the associated 1-form $\ta$ is determined by $
\ta(x)=g^{ij}F(e_i,e_j,x). $ Here $g^{ij}$ will stand for the
components of the inverse matrix of $g$ with respect to a basis
$\{e_i\}$ of $T_cM$ at $c\in M$. The 1-form $\ta$ is
\emph{closed}, \ie $\D\ta=0$, if and only if
$\left(\n_x\ta\right)y=\left(\n_y\ta\right)x$. Moreover, $\ta\circ
P$ is a closed 1-form if and only if
$\left(\n_x\ta\right)Py=\left(\n_y\ta\right)Px$.

In \cite{StaGri} it is proved that
$\W_1=\overline\W_3\oplus\overline\W_6$, where $\overline\W_3$ and
$\overline\W_6$ are the classes from the Naveira classification
determined by the following conditions:
\[
\begin{array}{rl}
\overline\W_3:& \quad F(A,B,\xi)=\frac{1}{n}g(A,B)\ta^v(\xi),\quad
F(\xi,\eta,A)=0,
\\[4pt]
\overline\W_6:& \quad
F(\xi,\eta,A)=\frac{1}{n}g(\xi,\eta)\ta^h(A),\quad F(A,B,\xi)=0,
\end{array}
\]
where $A,B,\xi,\eta\in\X(M)$, $PA=A$, $PB=B$, $P\xi=-\xi$,
$P\eta=-\eta$, $\ta^v(x)=\frac{1}{2}\left(\ta(x)-\ta(Px)\right)$,
$\ta^h(x)=\frac{1}{2}\left(\ta(x)+\ta(Px)\right)$. In the case
when $\tr P=0$, the above conditions for $\overline\W_3$ and
$\overline\W_6$ can be written for any $x,y,z$ in the following
form:
\begin{equation}\label{2}
\begin{array}{rl}
    \overline\W_3: \quad
    &F(x,y,z)=\frac{1}{2n}\bigl\{\left[g(x,y)+g(x,Py)\right]\ta(z)\\[4pt]
    &+\left[g(x,z)+g(x,Pz)\right]\ta(y)\bigr\},\quad
    \ta(Px)=-\ta(x),
\end{array}
\end{equation}
\begin{equation}\label{3}
\begin{array}{rl}
    \overline\W_6: \quad
    &F(x,y,z)=\frac{1}{2n}\bigl\{\left[g(x,y)-g(x,Py)\right]\ta(z)\\[4pt]
    &+\left[g(x,z)-g(x,Pz)\right]\ta(y)\bigr\},\quad
    \ta(Px)=\ta(x).
\end{array}
\end{equation}

We will say that a manifold from the class $\overline{\W}_3$
(resp., $\overline{\W}_6$) is a \emph{$\overline{\W}_3$-manifold}
(resp., \emph{$\overline{\W}_6$-manifold}).

In \cite{StaGri}, a tensor $L$ of type (0,4) with pro\-per\-ties%
\begin{equation}\label{4}
L(x,y,z,w)=-L(y,x,z,w)=-L(x,y,w,z),
\end{equation}
\begin{equation}\label{5}
L(x,y,z,w)+L(y,z,x,w)+L(z,x,y,w)=0
\end{equation}
is called a \emph{curvature-like tensor}. Such a tensor on a
Riemannian almost product manifold $(M,P,g)$ with the property
\begin{equation}\label{6}
L(x,y,Pz,Pw)=L(x,y,z,w)
\end{equation}
is called a \emph{Riemannian $P$-tensor} in \cite{Mek}. This
notion is an analogue of the notion of a K\"ahler tensor in
Hermitian geometry.

Let $S$ be a (0,2)-tensor on a Riemannian almost product manifold.
In \cite{StaGri} it is proved that
\[
\begin{split}
\psi_1(S)(x,y,z,w)
&=g(y,z)S(x,w)-g(x,z)S(y,w)\\[4pt]
&+S(y,z)g(x,w)-S(x,z)g(y,w)
\end{split}
\]
is a curvature-like  tensor if and only if $S(x,y)=S(y,x)$, and
the tensor $$\psi_2(S)(x,y,z,w)=\psi_1(S)(x,y,Pz,Pw)$$ is
curvature-like if and only if $S(x,Py)=S(y,Px)$. Obviously
$$\psi_2(S)(x,y,Pz,Pw)=\psi_1(S)(x,y,z,w).$$ The tensors
\[
\pi_1=\frac{1}{2}\psi_1(g),\qquad
\pi_2=\frac{1}{2}\psi_2(g),\qquad
\pi_3=\psi_1(\widetilde{g})=\psi_2(\widetilde{g}),
\]
where $\tg(x,y)=g(x,Py)$, are curvature-like, and the tensors
$\pi_1+\pi_2$, $\pi_3$ are Riemannian $P$-tensors.

Let us recall the following statement.

\begin{thm}[\cite{DobrMek}]\label{thm-2.1}
A curvature-like tensor $L$ on 4-dimensional Riemannian almost
product manifold is a Riemannian $P$-tensor if and only if $L$ has
the following form:
\begin{equation*}
    L=\frac{1}{8}\left\{\tau(L)(\pi_1+\pi_2)+\tau^*(L)\pi_3\right\}.
\end{equation*}
\end{thm}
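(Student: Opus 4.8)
The plan is to prove both implications, of which the converse is immediate and the direct claim is the substance. For the converse, suppose $L$ has the stated form. Since $\pi_1+\pi_2$ and $\pi_3$ are Riemannian $P$-tensors, and the defining conditions \eqref{4}, \eqref{5}, \eqref{6} are all linear in $L$, every linear combination of $\pi_1+\pi_2$ and $\pi_3$ is again a Riemannian $P$-tensor; in particular $\frac{1}{8}\{\tau(L)(\pi_1+\pi_2)+\tau^*(L)\pi_3\}$ is one, because $\tau(L)$ and $\tau^*(L)$ are merely scalars.

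For the direct claim, suppose $L$ is a Riemannian $P$-tensor. Since $\tr P=0$ and $\dim M=4$, the endomorphism $P$ is $g$-symmetric (from $P^2=\id$ and $g(Px,Py)=g(x,y)$ one gets $g(Px,y)=g(x,Py)$) and has eigenvalues $+1,+1,-1,-1$ with $g$-orthogonal eigenspaces. I would fix an adapted orthonormal basis $\{e_1,e_2,e_3,e_4\}$ of $T_cM$ with $Pe_1=e_1$, $Pe_2=e_2$, $Pe_3=-e_3$, $Pe_4=-e_4$. Applying \eqref{6}, together with the pair symmetry $L(x,y,z,w)=L(z,w,x,y)$ that follows from \eqref{4} and \eqref{5}, to these basis vectors shows that a component $L(e_i,e_j,e_k,e_l)$ can be nonzero only when each of the index pairs $(i,j)$ and $(k,l)$ consists either of two horizontal ($+1$) or of two vertical ($-1$) indices. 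In dimension four this leaves only $a:=L(e_1,e_2,e_1,e_2)$, $b:=L(e_3,e_4,e_3,e_4)$, and $c:=L(e_1,e_2,e_3,e_4)=L(e_3,e_4,e_1,e_2)$ as potentially nonzero (modulo \eqref{4}).

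The key step is then to invoke the first Bianchi identity \eqref{5} on the arguments $(e_1,e_2,e_3,e_4)$: the two cyclic terms whose leading pair is mixed (one horizontal and one vertical index) vanish by \eqref{6}, forcing $c=0$. Hence a Riemannian $P$-tensor in dimension four is completely determined by the two scalars $a$ and $b$, so the space of such tensors is at most two-dimensional. Since $\pi_1+\pi_2$ and $\pi_3$ are themselves Riemannian $P$-tensors, it suffices to check that their $(a,b)$-pairs, namely $(-2,-2)$ for $\pi_1+\pi_2$ and $(-2,2)$ for $\pi_3$, are linearly independent; they therefore form a basis, and $L=\alpha(\pi_1+\pi_2)+\beta\pi_3$ for suitable scalars $\alpha,\beta$.

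Finally I would recover $\alpha$ and $\beta$ by taking the two traces. A direct evaluation on the adapted basis gives $\tau(\pi_1+\pi_2)=8$, $\tau^*(\pi_1+\pi_2)=0$, $\tau(\pi_3)=0$ and $\tau^*(\pi_3)=8$, so applying $\tau$ and $\tau^*$ to $L=\alpha(\pi_1+\pi_2)+\beta\pi_3$ yields $\alpha=\frac{1}{8}\tau(L)$ and $\beta=\frac{1}{8}\tau^*(L)$, which is exactly the asserted identity. I expect the only delicate points to be the bookkeeping of the admissible components in the adapted basis and the correct sign conventions when computing $\tau$ and $\tau^*$ on $\pi_1+\pi_2$ and $\pi_3$; everything else is forced by linearity and by the two-dimensionality established through the Bianchi identity.
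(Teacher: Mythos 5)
Your proposal cannot be checked against an in-paper proof, because the paper does not actually prove this statement: Theorem~\ref{thm-2.1} is imported from \cite{DobrMek} and used as a black box. So the only meaningful review is of your argument on its own terms, and on those terms it is correct and complete.

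In detail: the converse direction is indeed immediate, since conditions \eqref{4}, \eqref{5}, \eqref{6} are pointwise linear in $L$ and the paper records that $\pi_1+\pi_2$ and $\pi_3$ are Riemannian $P$-tensors. For the direct direction your two structural steps are sound. First, $P$ is $g$-symmetric, so with the standing assumption $\tr P=0$ (which is essential here, and which you correctly invoke --- without it the eigenvalue count and the statement itself fail) one gets an orthonormal basis with $Pe_1=e_1$, $Pe_2=e_2$, $Pe_3=-e_3$, $Pe_4=-e_4$. Second, property \eqref{6} together with the pair symmetry $L(x,y,z,w)=L(z,w,x,y)$ (a standard consequence of \eqref{4} and \eqref{5}) annihilates every component with a mixed index pair, and the Bianchi identity \eqref{5} applied to $(e_1,e_2,e_3,e_4)$ kills the remaining cross component $c$, so the space of Riemannian $P$-tensors is at most two-dimensional. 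I verified your numerical values against the paper's conventions, namely $\pi_1(x,y,z,w)=g(y,z)g(x,w)-g(x,z)g(y,w)$, $\pi_2(x,y,z,w)=\pi_1(x,y,Pz,Pw)$, $\pi_3=\psi_1(\widetilde g)$, $\rho(y,z)=g^{ij}L(e_i,y,z,e_j)$, $\rho^*(y,z)=g^{ij}L(e_i,y,z,Pe_j)$: one gets $(a,b)=(-2,-2)$ for $\pi_1+\pi_2$ and $(a,b)=(-2,2)$ for $\pi_3$, and in dimension four with $\tr P=0$ the traces are $\tau(\pi_1+\pi_2)=8$, $\tau^*(\pi_1+\pi_2)=0$, $\tau(\pi_3)=0$, $\tau^*(\pi_3)=8$, so the coefficients $\alpha=\tau(L)/8$ and $\beta=\tau^*(L)/8$ come out exactly as you claim. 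The bookkeeping you flagged as the delicate point is the only real content, and it is done consistently.
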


 The curvature tensor $R$ of $\n$ is determined by
$R(x,y)z=\nabla_x \nabla_y z - \nabla_y \nabla_x z -
    \nabla_{[x,y]}z$ and the corresponding tensor of type (0,4) is defined as
follows $R(x,y,z,w)=g(R(x,y)z,w)$. We denote the Ricci tensor and
the scalar curvature of $R$ by $\rho$ and $\tau$, respectively,
\ie $\rho(y,z)=g^{ij}R(e_i,y,z,e_j)$ and
$\tau=g^{ij}\rho(e_i,e_j)$. The associated Ricci tensor $\rho^*$
and the associated scalar curvature $\tau^*$ of $R$ are determined
by $\rho^*(y,z)=g^{ij}R(e_i,y,z,Pe_j)$ and
$\tau^*=g^{ij}\rho^*(e_i,e_j)$. In a similar way there are
determined the Ricci tensor $\rho(L)$ and the scalar curvature
$\tau(L)$ for any curvature-like tensor $L$ as well as the
associated quantities $\rho^*(L)$ and $\tau^*(L)$.

It is known the Ricci identity for an almost product structure in
the following form
\[
R(x,y)Pz-PR(x,y)z=\left(\n_x\n_y P\right)z-\left(\n_y\n_x
P\right)z.
\]

Taking into account \eqref{1} and $\n g=0$, the latter identity
implies immediately
\begin{equation}\label{7}
\begin{split}
    R(x,y,Pz,w)-R(x,y,z,Pw)&=\left(\n_xF\right)(y,z,w)\\[4pt]
    &-\left(\n_yF\right)(x,z,w).
\end{split}
\end{equation}

\section{Curvature properties of $\overline\W_3$-manifolds}

Let $(M,P,g)$ is a $\overline\W_3$-manifold, i.e. \eqref{2} is
valid.

The first equality of \eqref{2} can be rewritten in the form
\begin{equation}\label{8}
    \left(\n_yP\right)z=\frac{1}{2n}\left\{\left[g(y,z)+g(y,Pz)\right]\Om+(y+Py)\ta(z)\right\},
\end{equation}
where the vector $\Om$ is defined by $g(\Om,x)=\ta(x)$. By
differentiation of the condition $\ta(Pz)=-\ta(z)$ and taking into
account \eqref{8} and $\ta(y+Py)=0$, we obtain
\begin{equation}\label{9}
    \left(\n_y\ta\right)Pz=-\left(\n_y\ta\right)z-\frac{\ta(\Om)}{2n}\left[g(y,z)+g(y,Pz)\right].
\end{equation}

By differentiation of the equality $\tg(y,z)=g(y,Pz)$ we get
\begin{equation}\label{10}
    \left(\n_x\tg\right)(y,z)=F(x,y,z).
\end{equation}

We differentiate the first equality of
 \eqref{2} and take into account
 \eqref{7},  \eqref{10} and $\n g=0$. Thus we obtain
 \[
\begin{split}
    &R(x,y,Pz,w)-R(x,y,z,Pw)=\\[4pt]
    &=\left[g(y,z)+g(y,Pz)\right]\left(\n_x\ta\right)w
    -\left[g(x,z)+g(x,Pz)\right]\left(\n_y\ta\right)w\\[4pt]
    &+\left[g(y,w)+g(y,Pw)\right]\left(\n_x\ta\right)z%
    -\left[g(x,w)+g(x,Pw)\right]\left(\n_y\ta\right)z\\[4pt]
    &+\left[F(x,y,z)-F(y,x,z)\right]\ta(x)+\left[F(x,y,w)-F(y,x,w)\right]\ta(z).
\end{split}
 \]

Thus, bearing in mind the first equality of \eqref{2}, we have
\begin{equation}\label{11}
\begin{split}
    &R(x,y,Pz,w)-R(x,y,z,Pw)=\\[4pt]
    &=\frac{1}{2n}\bigl\{\left[g(y,z)+g(y,Pz)\right]A(x,w)\\[4pt]
    &\phantom{=\frac{1}{2n}\bigl\{}-\left[g(x,z)+g(x,Pz)\right]A(y,w)\\[4pt]
    &\phantom{=\frac{1}{2n}\bigl\{}+\left[g(y,w)+g(y,Pw)\right]A(x,z)\\[4pt]%
    &\phantom{=\frac{1}{2n}\bigl\{}-\left[g(x,w)+g(x,Pw)\right]A(y,z)\bigr\},
\end{split}
\end{equation}
where
\begin{equation}\label{12}
    A(y,z)=\left(\n_y\ta\right)z-\frac{1}{2n}\ta(y)\ta(z).
\end{equation}

\begin{lem}\label{lem-3.1}
The following statements are equivalent for a
$\overline\W_3$-manifold $(M,P,g)$:
\begin{enumerate}
    \item[(i)] $\ta$ ia a closed 1-form;
    \item[(ii)] $A(y,z)=A(z,y)$;
    \item [(iii)]$A(y,Pz)=A(z,Py)$, $A(Py,Pz)=A(y,z)$.
\end{enumerate}
\end{lem}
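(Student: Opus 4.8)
The plan is to prove the chain (i) $\Leftrightarrow$ (ii) $\Leftrightarrow$ (iii) using the definition \eqref{12} of $A$ together with the two structural facts already at hand: relation \eqref{9}, which governs how $\n\ta$ behaves under $P$ in the second slot, and the condition $\ta(Px)=-\ta(x)$ from \eqref{2}. Throughout I shall freely use that $P$ is $g$-symmetric, \ie $g(Px,y)=g(x,Py)$, which follows at once from $g(Px,Py)=g(x,y)$ and $P^2=\id$.

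The equivalence (i) $\Leftrightarrow$ (ii) is essentially immediate. From \eqref{12} the correction term $\frac{1}{2n}\ta(y)\ta(z)$ is symmetric in $y,z$, so the antisymmetric part of $A$ coincides with that of $\n\ta$:
\[
A(y,z)-A(z,y)=\left(\n_y\ta\right)z-\left(\n_z\ta\right)y.
\]
Hence $A$ is symmetric precisely when $\left(\n_y\ta\right)z=\left(\n_z\ta\right)y$, which is exactly the closedness of $\ta$.

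The core of the argument is an auxiliary identity obtained by feeding \eqref{9} into \eqref{12}. Computing $A(y,Pz)$ from \eqref{12}, replacing $\left(\n_y\ta\right)Pz$ by \eqref{9} and $\ta(Pz)$ by $-\ta(z)$, the two contributions in $\ta(y)\ta(z)$ cancel and one is left with
\[
A(y,Pz)=-A(y,z)-\frac{\ta(\Om)}{2n}\left[g(y,z)+g(y,Pz)\right].
\]
I will call this $(\star)$; it is the engine of the lemma. Writing $(\star)$ with $y$ and $z$ exchanged and using $g(z,Py)=g(y,Pz)$ yields, after subtraction,
\[
A(y,Pz)-A(z,Py)=-\bigl(A(y,z)-A(z,y)\bigr),
\]
so the first equation of (iii) is equivalent to (ii), and hence to (i). For the second equation of (iii) I apply $(\star)$ in the first slot to express $A(Py,Pz)$ through $A(Py,z)$; under the symmetry (ii) the last displayed relation gives $A(Py,z)=A(z,Py)=A(y,Pz)$, and substituting $(\star)$ once more makes the $\ta(\Om)$-terms cancel, leaving $A(Py,Pz)=A(y,z)$. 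This establishes (ii) $\Rightarrow$ (iii); the reverse implication (iii) $\Rightarrow$ (ii) is already contained in the displayed identity, since its first equation forces the right-hand side to vanish.

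The only subtle point, and what I expect to be the main obstacle, is that relation \eqref{9}, and hence $(\star)$, controls $P$ only in the \emph{second} argument of $A$, whereas the relation $A(Py,Pz)=A(y,z)$ also involves $P$ in the first slot. There is no a priori identity for $A(Py,z)$; the bridge is precisely the symmetry hypothesis (ii), which lets one trade the first slot for the second via $A(Py,z)=A(z,Py)=A(y,Pz)$. This is why the $P$-invariance in (iii) requires the full strength of (ii) and does not come for free from $(\star)$ alone.
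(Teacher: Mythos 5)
Your proposal is correct and follows essentially the same route as the paper: you derive the key identity $(\star)$, which is precisely the paper's equation \eqref{13}, from \eqref{9}, \eqref{12} and $\ta(Pz)=-\ta(z)$, and then read off the equivalences from \eqref{12} and \eqref{13}. The paper states this last step without detail ("Equalities \eqref{12} and \eqref{13} imply the truthfulness of the lemma"), so your careful working-out of (ii)~$\Leftrightarrow$~(iii) — in particular the observation that $P$-invariance in the first slot must be obtained via the symmetry hypothesis — simply fills in what the paper leaves to the reader.
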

\begin{proof}
From \eqref{2} and \eqref{9} we obtain
\begin{equation}\label{13}
    A(y,Pz)=-A(y,z)-\frac{\ta(\Om)}{2n}\left[g(y,z)+g(y,Pz)\right].
\end{equation}
Equalities \eqref{12} and \eqref{13} imply the truthfulness of the
lemma.
\end{proof}

We substitute $Pw$ for $w$ in \eqref{11}. By the obtained
equality, \eqref{12}, \eqref{13} and \lemref{lem-3.1}, we get the
following
\begin{thm}\label{thm-3.2}
The following equality is valid for a $\overline\W_3$-manifold
$(M,P,g)$:
\begin{equation}\label{14}
\begin{split}
    &R(x,y,Pz,Pw)-R(x,y,z,w)=\\[4pt]
    &=-\frac{1}{2n}\left\{(\psi_1-\psi_2)(A)(x,y,z,w)+\frac{\ta(\Om)}{2n}(\pi_1-\pi_2)(x,y,z,w)\right\}.
\end{split}
\end{equation}
\end{thm}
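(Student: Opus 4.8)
The plan is to start from the already-established identity \eqref{11} and carry out the single substitution of $Pw$ for $w$. Since $P^2=\id$, the left-hand side $R(x,y,Pz,w)-R(x,y,z,Pw)$ becomes exactly $R(x,y,Pz,Pw)-R(x,y,z,w)$, the quantity we want. On the right-hand side the combinations $g(y,w)+g(y,Pw)$ and $g(x,w)+g(x,Pw)$ are invariant under $w\mapsto Pw$ and stay unchanged, whereas the factors $A(x,w)$ and $A(y,w)$ turn into $A(x,Pw)$ and $A(y,Pw)$.

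Next I would invoke the relation \eqref{13} from the proof of \lemref{lem-3.1} to replace the mixed values $A(x,Pw)$ and $A(y,Pw)$ by $-A(x,w)$ and $-A(y,w)$ together with pure metric terms carrying the factor $\frac{\ta(\Om)}{2n}$. After this the right-hand side decomposes into an $A$-linear part and a part made up solely of products of $g$ and $\tg$. The $A$-linear part separates into the untwisted terms, which assemble into $-\psi_1(A)(x,y,z,w)$, and the twisted terms (those carrying a single $P$ in a metric slot); applying \eqref{13} once more to the latter so as to shift $P$ onto the second argument of $A$, one recognizes exactly $+\psi_2(A)(x,y,z,w)=\psi_1(A)(x,y,Pz,Pw)$. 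Hence the $A$-part contributes $-(\psi_1-\psi_2)(A)$, in agreement with \eqref{14} up to the global factor $-\frac{1}{2n}$.

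The remaining and most delicate step is to show that the metric products collapse correctly. Grouping them by type, the pure $g\cdot g$ terms must reproduce $-\pi_1$ and the pure $\tg\cdot\tg$ terms must reproduce $+\pi_2$ (using $\pi_1=\tfrac12\psi_1(g)$ and $\pi_2=\tfrac12\psi_2(g)$), while every mixed $g\cdot\tg$ term has to cancel in pairs. This cancellation is where I expect the main obstacle: the factor $\frac{\ta(\Om)}{2n}$ is produced in both applications of \eqref{13}, so a sign error in any of the eight mixed contributions would destroy it. Once the mixed terms are checked to cancel and the surviving pure terms are identified with $-\frac{\ta(\Om)}{2n}(\pi_1-\pi_2)$, combining with the $A$-part gives
\[
R(x,y,Pz,Pw)-R(x,y,z,w)=-\frac{1}{2n}\Bigl\{(\psi_1-\psi_2)(A)(x,y,z,w)+\tfrac{\ta(\Om)}{2n}(\pi_1-\pi_2)(x,y,z,w)\Bigr\},
\]
which is \eqref{14}. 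Here \lemref{lem-3.1} enters only through the identity \eqref{13}, which records the behaviour of $A$ under $P$ in its second slot; no symmetry of $A$ (equivalently, no closedness of $\ta$) is needed, so the formula holds for every $\overline\W_3$-manifold.
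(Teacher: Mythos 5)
Your proposal is correct and follows essentially the same route as the paper's own (very terse) proof: substitute $Pw$ for $w$ in \eqref{11}, then use \eqref{13} to eliminate the $A(\cdot,Pw)$ terms, after which the $A$-linear part assembles into $-(\psi_1-\psi_2)(A)$ and the purely metric terms collapse to $-\frac{\ta(\Om)}{2n}(\pi_1-\pi_2)$, exactly as you describe. Your closing remark is also accurate: only identity \eqref{13} (not \lemref{lem-3.1}'s closedness hypothesis) is actually used, which is why \eqref{14} holds on every $\overline\W_3$-manifold.
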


According to \lemref{lem-3.1}, the 1-form $\ta$ is closed if and
if $\psi_1(A)$ and $\psi_2(A)$ are curvature-like tensors. In this
case, since $\pi_1$ and $\pi_2$ are also curvature-like tensors,
equality \eqref{14} implies the following

\begin{thm}\label{thm-3.3}
The 1-form $\ta$ is closed on a $\overline\W_3$-manifold $(M,P,g)$
if and only if the following equality is valid:
\begin{equation}\label{15}
    R(x,y,Pz,Pw)+R(y,z,Px,Pw)+R(z,x,Py,Pw)=0.
\end{equation}
\end{thm}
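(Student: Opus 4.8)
The plan is to read \eqref{15} as the vanishing of the cyclic sum $\sx R(x,y,Pz,Pw)$ and to play it off against the first Bianchi identity for $R$ by means of \thmref{thm-3.2}. Applying $\sx$ to \eqref{14} and using $\sx R(x,y,z,w)=0$ on the left, the matter reduces to the right-hand side: since $\pi_1$ and $\pi_2$ are curvature-like they satisfy \eqref{5}, so $\sx(\pi_1-\pi_2)=0$ and the $\ta(\Om)$-term disappears. Hence
\[
\sx R(x,y,Pz,Pw)=-\frac{1}{2n}\,\sx(\psi_1-\psi_2)(A)(x,y,z,w),
\]
and \eqref{15} becomes equivalent to $\sx(\psi_1-\psi_2)(A)=0$. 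Everything then hinges on relating this cyclic sum to the symmetry of $A$, since by \lemref{lem-3.1} the closedness of $\ta$ is equivalent to $A(y,z)=A(z,y)$.

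For the forward implication I would argue directly: if $\ta$ is closed, then \lemref{lem-3.1} yields both $A(y,z)=A(z,y)$ and $A(y,Pz)=A(z,Py)$, which are exactly the conditions established in \cite{StaGri} for $\psi_1(A)$ and $\psi_2(A)$ to be curvature-like. Then \eqref{5} forces $\sx\psi_1(A)=\sx\psi_2(A)=0$, the right-hand side above vanishes, and \eqref{15} holds.

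For the converse I would make the cyclic sum explicit. A short computation from the definition of $\psi_1$ shows that $\sx\psi_1(A)$ depends only on the skew part $A^-(y,z):=A(y,z)-A(z,y)$, and likewise $\sx\psi_2(A)$ depends only on $B(y,z):=A(y,Pz)-A(z,Py)$; moreover \eqref{13} gives the key simplification $B=-A^-$, so that
\[
\sx(\psi_1-\psi_2)(A)(x,y,z,w)=\sx\bigl\{A^-(y,z)\left[g(x,w)+g(x,Pw)\right]\bigr\}.
\]
The obstacle is to deduce $A^-=0$ from the vanishing of this single tensor identity. I would extract this by tracing over the pair $x,w$ in an orthonormal frame, where $\tr P=0$ enters, obtaining the pointwise relation $(2n-2)A^-(y,z)+A^-(z,Py)+A^-(Pz,y)=0$. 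Evaluating it separately on the $\pm1$-eigenvectors of $P$ kills $A^-$ on $V_-\times V_-$ and on $V_+\times V_-$ once $\dim M\geq4$; on $V_+\times V_+$ the relation degenerates, but there \eqref{13} gives directly $A(y,z)=-\frac{\ta(\Om)}{2n}g(y,z)$, which is symmetric, so $A^-$ vanishes there as well. Thus $A^-=0$, i.e. $A$ is symmetric, and \lemref{lem-3.1} returns the closedness of $\ta$. The main difficulty is precisely this last step — passing from the cyclic, wedge-type identity back to the pointwise symmetry of $A$ — and the two ingredients that make it go through are the simplification $B=-A^-$ coming from \eqref{13} and the trace-freeness of $P$.
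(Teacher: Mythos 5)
Your proof is correct, and for the direct implication it is exactly the paper's own argument: by \lemref{lem-3.1} and the criteria of \cite{StaGri}, closedness of $\ta$ makes $\psi_1(A)$ and $\psi_2(A)$ curvature-like, so the right-hand side of \eqref{14} satisfies \eqref{5}, and combining with the first Bianchi identity for $R$ yields \eqref{15}. The genuine difference is in the converse, where your proposal is more complete than the paper. The paper's proof consists of the single remark that $\ta$ is closed if and only if $\psi_1(A)$ and $\psi_2(A)$ are curvature-like, after which \eqref{14} is said to imply the theorem; this really establishes only the direct implication, because from \eqref{15} one can conclude only that the combination $(\psi_1-\psi_2)(A)$ satisfies the cyclic identity \eqref{5}, not that $\psi_1(A)$ and $\psi_2(A)$ do so separately (a difference of two tensors can satisfy \eqref{5} without either summand doing so). Your argument supplies precisely this missing step, and I checked its ingredients: $\sx\psi_1(A)$ indeed depends only on the skew part $A^-$, equality \eqref{13} indeed gives $A(y,Pz)-A(z,Py)=-A^-(y,z)$, hence $\sx(\psi_1-\psi_2)(A)(x,y,z,w)=\sx\bigl\{A^-(y,z)\bigl[g(x,w)+g(x,Pw)\bigr]\bigr\}$, the trace over $x,w$ (using $\tr P=0$) gives $(2n-2)A^-(y,z)+A^-(z,Py)+A^-(Pz,y)=0$, and the eigenspace analysis then kills $A^-$. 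Two remarks: (a) the trace relation degenerates on the $(+1)$-eigenspace only when $n=2$ (for $n>2$ it gives $(2n-4)A^-=0$ there), but your alternative appeal to \eqref{13} on that eigenspace works for every $n$, so nothing is lost; (b) your proof needs $\dim M\geq 4$ in the mixed case, and this restriction is essential rather than an artifact of the method: in dimension two any three tangent vectors are linearly dependent, so \eqref{15} is an algebraic identity valid for every curvature tensor, while flat $\overline\W_3$-structures on $\R^2$ with non-closed $\ta$ exist, so the equivalence itself fails there --- a hypothesis the paper leaves entirely implicit. In short, your proof not only matches the paper's route on the easy half but completes a step the paper asserts without proof.
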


\begin{cor}\label{cor-3.4}
If the 1-form $\ta$ is closed on a $\overline\W_3$-manifold
$(M,P,g)$ then the following equalities are valid:
\begin{equation}\label{15'}
    R(Px,Py,Pz,Pw)=R(x,y,z,w),\qquad \rho(Py,Pz)=\rho(y,z).
\end{equation}
\end{cor}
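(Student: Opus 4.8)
The plan is to derive the first identity in \eqref{15'} from \thmref{thm-3.2} and then obtain the second one as a contraction of the first. Write $\Phi(x,y,z,w)$ for the right-hand side of \eqref{14}, so that \thmref{thm-3.2} reads
\[
R(x,y,Pz,Pw)-R(x,y,z,w)=\Phi(x,y,z,w),\qquad
\Phi=-\frac{1}{2n}\Bigl\{(\psi_1-\psi_2)(A)+\tfrac{\ta(\Om)}{2n}(\pi_1-\pi_2)\Bigr\}.
\]
The idea for the first identity is to apply \eqref{14} twice, using the pair symmetry $R(x,y,z,w)=R(z,w,x,y)$ of the curvature tensor to transfer $P$ from the last pair of slots, where \eqref{14} acts, to the first pair. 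Applying \eqref{14} directly gives $R(Px,Py,Pz,Pw)=R(Px,Py,z,w)+\Phi(Px,Py,z,w)$, while rewriting $R(Px,Py,z,w)=R(z,w,Px,Py)$ and applying \eqref{14} once more yields $R(Px,Py,z,w)=R(x,y,z,w)+\Phi(z,w,x,y)$. Adding the two, the first identity in \eqref{15'} becomes equivalent to the single relation $\Phi(Px,Py,z,w)+\Phi(z,w,x,y)=0$.

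To establish this I would use two symmetries of $\Phi$. The first, valid unconditionally, is the sign change $\Phi(x,y,Pz,Pw)=-\Phi(x,y,z,w)$: it follows at once from the \emph{left}-hand side of \eqref{14} upon replacing $z,w$ by $Pz,Pw$ and using $P^2=\id$. The second is the pair symmetry $\Phi(x,y,z,w)=\Phi(z,w,x,y)$, and this is exactly where closedness of $\ta$ enters: by \lemref{lem-3.1} the hypothesis $\D\ta=0$ gives $A(y,z)=A(z,y)$ and $A(y,Pz)=A(z,Py)$, so that both $\psi_1(A)$ and $\psi_2(A)$ are curvature-like by the characterizations of $\psi_1$ and $\psi_2$ recalled in Section~1; since $\pi_1,\pi_2$ are curvature-like as well and every curvature-like tensor is pair-symmetric, $\Phi$ inherits the pair symmetry. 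Combining the two, $\Phi(Px,Py,z,w)=\Phi(z,w,Px,Py)=-\Phi(z,w,x,y)$, so the required relation holds and $R(Px,Py,Pz,Pw)=R(x,y,z,w)$.

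For the second identity I would contract the first. Since $P$ is $g$-orthogonal, $g(Pe_i,Pe_j)=g(e_i,e_j)$, so $\{Pe_i\}$ is a basis with the same Gram matrix as $\{e_i\}$ and hence the same inverse components $g^{ij}$; as the metric contraction of a $(0,4)$-tensor over its first and fourth slots is basis independent, $g^{ij}R(Pe_i,a,b,Pe_j)=g^{ij}R(e_i,a,b,e_j)$ for all $a,b$. Taking $a=Py$, $b=Pz$ and inserting the first identity in \eqref{15'} in the form $R(Pe_i,Py,Pz,Pe_j)=R(e_i,y,z,e_j)$ gives
\[
\rho(Py,Pz)=g^{ij}R(e_i,Py,Pz,e_j)=g^{ij}R(Pe_i,Py,Pz,Pe_j)=g^{ij}R(e_i,y,z,e_j)=\rho(y,z).
\]

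The step I expect to be the main obstacle is the correct assembly of the two symmetries of $\Phi$—in particular checking that $\D\ta=0$ really forces \emph{both} $\psi_1(A)$ and $\psi_2(A)$ to be curvature-like (this is the content of \lemref{lem-3.1}(ii)--(iii)), and keeping careful track of which pair of slots carries $P$ when one alternates between \eqref{14} and the pair symmetry of $R$. Once these are in place the cancellation, and with it the corollary, is immediate.
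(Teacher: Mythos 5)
Your proof is correct and runs on exactly the machinery the paper itself relies on: the identity \eqref{14} of \thmref{thm-3.2}, the equivalences of \lemref{lem-3.1} (which make $\psi_1(A)$, $\psi_2(A)$ curvature-like when $\D\ta=0$), the standard algebraic fact that a tensor with the symmetries \eqref{4}, \eqref{5} is pair-symmetric, and the sign flip of the right-hand side of \eqref{14} under $z,w\mapsto Pz,Pw$; the contraction argument for $\rho(Py,Pz)=\rho(y,z)$ via the $g$-orthogonal basis $\{Pe_i\}$ is also sound. The paper states the corollary without proof as an immediate consequence of the preceding results, and your argument supplies precisely the details left implicit there, so there is nothing to correct.
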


Further we consider a $\overline\W_3$-manifold $(M,P,g)$ with a
closed 1-form $\ta$.

We define a tensor $K$ of type (0,4) by
\begin{equation}\label{16}
    K(x,y,z,w)=\frac{1}{2}\left\{R(x,y,z,w)+R(x,y,Pz,Pw)\right\}.
\end{equation}

\begin{prop}\label{prop-3.5}
If $(M,P,g)$ is a $\overline\W_3$-manifold with closed 1-form
$\ta$ then $K$ is a Riemannian $P$-tensor and it has the form
\begin{equation}\label{17}
    K=R-\frac{1}{4n}\left\{(\psi_1-\psi_2)(A)+\frac{\ta(\Om)}{2n}(\pi_1-\pi_2)\right\}.
\end{equation}
\end{prop}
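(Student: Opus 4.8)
The plan is to verify the three defining properties \eqref{4}, \eqref{5}, \eqref{6} of a Riemannian $P$-tensor directly from the definition \eqref{16} of $K$, and then to read off the explicit form \eqref{17} from \thmref{thm-3.2}.

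First I would dispose of the two easy properties. Property \eqref{6} is immediate from $P^2=\id$: replacing $z,w$ by $Pz,Pw$ in \eqref{16} gives $K(x,y,Pz,Pw)=\frac{1}{2}\{R(x,y,Pz,Pw)+R(x,y,z,w)\}=K(x,y,z,w)$. The two skew-symmetries in \eqref{4}, in the pair $(x,y)$ and in the pair $(z,w)$, are inherited term by term from the corresponding skew-symmetries of $R$, since in \eqref{16} the structure $P$ acts only on the last two arguments and hence preserves $R(x,y,Pz,Pw)=-R(x,y,Pw,Pz)$ as well as the antisymmetry in the first pair.

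The substance of the proof is the first Bianchi identity \eqref{5} for $K$. Forming the cyclic sum $\sx K(x,y,z,w)$ and expanding by \eqref{16}, I would split it into two halves. The half assembled from the untwisted terms $R(x,y,z,w)$ is a cyclic sum of $R$ and therefore vanishes by the Bianchi identity for the curvature tensor. The half assembled from the twisted terms equals, up to the factor $\frac{1}{2}$, the expression $R(x,y,Pz,Pw)+R(y,z,Px,Pw)+R(z,x,Py,Pw)$. The decisive observation, and the one point where the hypothesis that $\ta$ is \emph{closed} is genuinely used, is that this is exactly the left-hand side of \eqref{15}; since $\ta$ is closed, \thmref{thm-3.3} forces it to vanish. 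Thus the whole cyclic sum is zero, so $K$ is curvature-like, and together with \eqref{6} this makes $K$ a Riemannian $P$-tensor.

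Finally, the form \eqref{17} follows by substitution: solving \eqref{14} of \thmref{thm-3.2} for $R(x,y,Pz,Pw)$ and inserting the result into \eqref{16} reduces $K$ to $R-\frac{1}{4n}\{(\psi_1-\psi_2)(A)+\frac{\ta(\Om)}{2n}(\pi_1-\pi_2)\}$, which is precisely \eqref{17}. I expect no serious obstacle here; the only delicate step is recognizing the twisted cyclic sum as the left-hand side of \eqref{15}, after which \thmref{thm-3.3} does all the work, everything else being a direct consequence of $P^2=\id$ and of the properties already recorded for $R$ and for the tensors $\psi_i(A)$ and $\pi_i$ in \lemref{lem-3.1}.
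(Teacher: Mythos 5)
Your proposal is correct and follows essentially the same route as the paper: properties \eqref{4} and \eqref{6} are immediate from the definition \eqref{16}, the Bianchi identity \eqref{5} for $K$ reduces via the cyclic sum to the left-hand side of \eqref{15}, which vanishes by \thmref{thm-3.3} since $\ta$ is closed, and formula \eqref{17} is obtained by combining \eqref{14} with \eqref{16}. The only difference is that you spell out the details the paper dismisses as obvious, which is fine.
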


\begin{proof}
Obviously, properties \eqref{4} and \eqref{6} are satisfied for
$K$. Moreover, property \eqref{5} is valid because of \eqref{15}.
Therefore, $K$ is a Riemannian $P$-tensor. Equality \eqref{17}
follows from \eqref{14} and \eqref{16}.
\end{proof}

\begin{cor}\label{cor-3.6}
If $(M,P,g)$ is a $\overline\W_3$-manifold with closed 1-form
$\ta$ then for the Ricci tensor and the scalar curvatures of $K$
the following formulae are valid:
\[
\rho(K)(y,z)=\rho(y,z)-\frac{1}{4n}\left\{\left[\tr
A+\ta(\Om)\right]\left[g(y,z)+g(y,Pz)\right]+2nA(y,z)\right\},
\]
\[
\tau(K)=\tau-\Div\Om-\frac{n-1}{2n}\ta(\Om),\qquad
\tau^*(K)=\tau^*,
\]
where $\Div\Om$ is the divergence of the vector $\Om$.
\end{cor}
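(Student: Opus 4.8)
The plan is to apply the Ricci contraction $L\mapsto\rho(L)$, the scalar contraction $L\mapsto\tau(L)$ and its associate $L\mapsto\tau^*(L)$ directly to the explicit formula \eqref{17} for $K$ supplied by \propref{prop-3.5}. Since all three of these operations are linear in $L$, the problem reduces to computing them on the four building blocks $\psi_1(A)$, $\psi_2(A)$, $\pi_1$, $\pi_2$, together with $\rho(R)=\rho$, $\tau(R)=\tau$, $\tau^*(R)=\tau^*$. The workhorse is the Ricci tensor of $\psi_1$ and $\psi_2$ evaluated on an arbitrary symmetric $(0,2)$-tensor $S$. Contracting the defining expression for $\psi_1(S)$ in the first and fourth arguments with $g^{ij}$, and using $g^{ij}g(e_i,e_j)=2n$, gives $\rho(\psi_1(S))(y,z)=(\tr S)\,g(y,z)+(2n-2)S(y,z)$. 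For $\psi_2(S)(x,y,z,w)=\psi_1(S)(x,y,Pz,Pw)$ the same contraction, now invoking the $g$-symmetry $g(Px,y)=g(x,Py)$, the relation $P^2=\id$, and crucially $\tr P=0$, yields $\rho(\psi_2(S))(y,z)=(\tr^{*}S)\,g(y,Pz)-S(y,z)-S(Py,Pz)$, where I abbreviate $\tr^{*}S=g^{ij}S(e_i,Pe_j)$.

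Next I would feed in the special structure of $A$. Because $\ta$ is closed, \lemref{lem-3.1} supplies that $A$ is symmetric with $A(Py,Pz)=A(y,z)$, so the last two terms of $\rho(\psi_2(A))$ collapse to $-2A(y,z)$. Tracing the identity \eqref{13} over $y,z$ (once more using $\tr P=0$) produces $\tr^{*}A=-\tr A-\ta(\Om)$. Substituting these into the two Ricci formulae and forming $\rho(\psi_1(A))-\rho(\psi_2(A))$, the $A$-terms combine to $2nA(y,z)$ while the scalar factors reorganize into $(\tr A)\,g(y,z)+(\tr A+\ta(\Om))\,g(y,Pz)$. Adding $\frac{\ta(\Om)}{2n}(\rho(\pi_1)-\rho(\pi_2))$, where specializing the general formulae to $S=g$ gives $\rho(\pi_1)=(2n-1)g$ and $\rho(\pi_2)=-g$ (hence $\rho(\pi_1)-\rho(\pi_2)=2n\,g$), the symmetric part absorbs the extra $\ta(\Om)\,g(y,z)$ and everything packages precisely into $[\tr A+\ta(\Om)][g(y,z)+g(y,Pz)]+2nA(y,z)$, which is the stated expression for $\rho(K)$.

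For $\tau(K)$ I would contract $\rho(K)$ once more with $g^{ij}$, using $g^{ij}g(e_i,e_j)=2n$, $g^{ij}g(e_i,Pe_j)=\tr P=0$ and $g^{ij}A(e_i,e_j)=\tr A$; this gives $\tau(K)=\tau-\tr A-\tfrac12\ta(\Om)$. The remaining step is to rewrite $\tr A$ geometrically: from the definition \eqref{12}, $\tr A=g^{ij}(\n_{e_i}\ta)e_j-\frac{1}{2n}g^{ij}\ta(e_i)\ta(e_j)$, and since $\ta=g(\Om,\cdot)$ the first sum equals $\Div\Om$ and $g^{ij}\ta(e_i)\ta(e_j)=\ta(\Om)$, so $\tr A=\Div\Om-\frac{1}{2n}\ta(\Om)$; feeding this back converts the coefficient of $\ta(\Om)$ into $-\frac{n-1}{2n}$ and yields the claimed $\tau(K)$. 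The associated scalar curvature is lighter: a direct contraction shows $\tau^{*}(\psi_1(S))=\tau^{*}(\psi_2(S))=(2n-2)\tr^{*}S$ for every symmetric $S$, so the $\psi_1(A)-\psi_2(A)$ contribution vanishes identically, while $\tr^{*}g=\tr P=0$ annihilates the $\pi_1-\pi_2$ contribution, leaving $\tau^{*}(K)=\tau^{*}$.

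I expect the main obstacle to be the bookkeeping of the $P$-twisted contractions in the $\psi_2$-terms, namely correctly tracking every appearance of $\tr P=0$, the $g$-symmetry of $P$, and the two symmetry relations of \lemref{lem-3.1}, since a single misplaced $P$ there would destroy the cancellations that let the coefficients assemble into the compact final form.
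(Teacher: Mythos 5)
Your proposal is correct and is exactly the argument the paper leaves implicit: Corollary~\ref{cor-3.6} is obtained by contracting the explicit formula \eqref{17} from Proposition~\ref{prop-3.5}, using linearity of $\rho$, $\tau$, $\tau^*$ on the blocks $\psi_1(A)$, $\psi_2(A)$, $\pi_1$, $\pi_2$ together with the symmetries of $A$ from Lemma~\ref{lem-3.1}, the trace of \eqref{13}, $\tr P=0$, and $\tr A=\Div\Om-\frac{1}{2n}\ta(\Om)$. All of your intermediate contraction formulae check out, so the computation is sound and matches the stated result.
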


By virtue of \thmref{thm-2.1} and \corref{cor-3.6} we obtain the
following
\begin{thm}\label{thm-3.7}
If $(M,P,g)$ is a 4-dimensional $\overline\W_3$-manifold with
closed 1-form $\ta$ then the tensor $K$ has the form:
\begin{equation}\label{17'}
    K=\frac{1}{8}\left\{\left[\tau+\Div\Om-\frac{\ta(\Om)}{4}\right](\pi_1+\pi_2)+\tau^*\pi_3\right\}.
\end{equation}
\end{thm}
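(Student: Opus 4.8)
The plan is to read off the formula by feeding the scalar invariants of $K$, already computed in the preceding results, into the universal four-dimensional decomposition supplied by \thmref{thm-2.1}. Since $\dim M=2n=4$, we work throughout with $n=2$.

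First I would invoke \propref{prop-3.5}: under the standing assumption that $\ta$ is closed, the tensor $K$ of \eqref{16} is a Riemannian $P$-tensor. This is the only place where the closedness hypothesis is genuinely used, entering through \thmref{thm-3.3} and the cyclic identity \eqref{15} that secures property \eqref{5} for $K$. Because $K$ is a Riemannian $P$-tensor on a four-dimensional Riemannian almost product manifold, it satisfies the hypotheses of \thmref{thm-2.1}, which yields at once
\[
K=\frac{1}{8}\left\{\tau(K)(\pi_1+\pi_2)+\tau^*(K)\pi_3\right\}.
\]

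It then remains only to rewrite the two coefficients via \corref{cor-3.6}. That corollary gives the associated scalar curvature directly as $\tau^*(K)=\tau^*$, which is the coefficient of $\pi_3$. For the scalar curvature it gives $\tau(K)=\tau-\Div\Om-\frac{n-1}{2n}\ta(\Om)$; specializing to $n=2$ collapses the coefficient $\frac{n-1}{2n}$ to $\frac{1}{4}$, so that $\tau(K)$ becomes precisely the bracketed scalar displayed in \eqref{17'}. Substituting these two expressions into the decomposition above reproduces \eqref{17'}.

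The proof carries no real obstacle beyond this bookkeeping, since all the substance is already packaged in \thmref{thm-2.1} and \corref{cor-3.6}. The one point deserving attention is verifying that the hypotheses of \thmref{thm-2.1} actually hold — that is, that $K$ is a genuine Riemannian $P$-tensor — which \propref{prop-3.5} guarantees, together with the correct evaluation of the dimension-dependent coefficient $\frac{n-1}{2n}$ at $n=2$.
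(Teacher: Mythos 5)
Your route is exactly the paper's: the paper's entire proof is the observation that \propref{prop-3.5} makes $K$ a Riemannian $P$-tensor, so \thmref{thm-2.1} applies to it, and the coefficients $\tau(K)$, $\tau^*(K)$ are then read off from \corref{cor-3.6}. So there is no methodological difference. The problem is in your final step, which you assert rather than check: at $n=2$, \corref{cor-3.6} gives
\[
\tau(K)=\tau-\Div\Om-\tfrac{1}{4}\ta(\Om),
\]
whereas the bracketed scalar displayed in \eqref{17'} is $\tau+\Div\Om-\tfrac{1}{4}\ta(\Om)$. The signs of $\Div\Om$ disagree, so your substitution does not yield ``precisely'' the stated formula; it yields the formula with $-\Div\Om$. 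In other words, \corref{cor-3.6} and the statement of \thmref{thm-3.7} are mutually inconsistent, and a proof that simply quotes both, as yours does, cannot close; you should have flagged the conflict instead of declaring the coefficients to match.

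For what it is worth, the inconsistency lies in the paper rather than in your strategy. A direct check supports \corref{cor-3.6}: contracting \eqref{17} and using $\tr A=\Div\Om-\frac{1}{2n}\ta(\Om)$ (with the standard sign convention for the divergence) gives $\tau(K)=\tau-\Div\Om-\frac{n-1}{2n}\ta(\Om)$. So \eqref{17'} as printed appears to carry a sign typo, most plausibly imported from the $\overline\W_6$ case: there \corref{cor-4.6} gives $\tau(K)=\tau+\Div\Om-\frac{n-1}{2n}\ta(\Om)$, so \thmref{thm-4.7} genuinely produces the coefficient $\tau+\Div\Om-\frac{1}{4}\ta(\Om)$, and \eqref{17'} is correct in that setting. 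Your argument, carried out honestly, proves \thmref{thm-3.7} with $-\Div\Om$ in place of $+\Div\Om$; as a verification of the statement exactly as displayed, it has a gap at the point where you claim the computed $\tau(K)$ equals the bracketed scalar.
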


Now we will find a necessary and sufficient condition the tensor
$R$ to be a Riemannian $P$-tensor on a $\overline\W_3$-manifold
with closed 1-form $\ta$.

If $R$ is a Riemannian $P$-tensor on a $\overline\W_3$-manifold
with closed 1-form $\ta$ then the left-hand side of \eqref{14} is
zero, i.e. we have
\[
    (\psi_1-\psi_2)(A)+\frac{\ta(\Om)}{2n}(\pi_1-\pi_2)=0.
\]
Then, according to \eqref{13}, it follows
\begin{equation}\label{18}
    \left[\tr
A+\ta(\Om)\right]\left[g(y,z)+g(y,Pz)\right]+2nA(y,z)=0.
\end{equation}
From \eqref{18} we obtain $\tr A=-\frac{\ta(\Om)}{2}$ and then
again from \eqref{18} it follows
\begin{equation}\label{19}
    A(y,z)=-\frac{\ta(\Om)}{4n}\left[g(y,z)+g(y,Pz)\right].
\end{equation}

Vice versa, let condition \eqref{19} be satisfied for a
$\overline\W_3$-manifold with closed 1-form $\ta$. Then
\[
    \psi_1(A)=-\frac{\ta(\Om)}{4n}(2\pi_1+\pi_3),\qquad
    \psi_2(A)=-\frac{\ta(\Om)}{4n}(2\pi_2+\pi_3)
\]
and thus \eqref{14} implies condition \eqref{6} for $R$, i.e. $R$
is a Riemannian $P$-tensor.

Therefore, it is valid the following
\begin{thm}\label{thm-3.8}
Let $(M,P,g)$ be a $\overline\W_3$-manifold with closed 1-form
$\ta$. Then $R$ is a Riemannian $P$-tensor if and only if
condition \eqref{19} is valid.
\end{thm}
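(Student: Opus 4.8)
The plan is to reduce everything to equation \eqref{14}. Since $R$ is an actual curvature tensor it already satisfies \eqref{4} and \eqref{5}, so $R$ is a Riemannian $P$-tensor precisely when it obeys the $P$-compatibility \eqref{6}, i.e. $R(x,y,Pz,Pw)=R(x,y,z,w)$. By \eqref{14} this holds if and only if the right-hand side vanishes, namely
\[
(\psi_1-\psi_2)(A)+\frac{\ta(\Om)}{2n}(\pi_1-\pi_2)=0.
\]
Thus the theorem amounts to showing that this $(0,4)$-identity is equivalent to the pointwise condition \eqref{19}, and I would prove the two implications separately.

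For the forward implication I would descend from this $(0,4)$-identity to a $(0,2)$-statement by contracting it with $g^{ij}$ over a suitable pair of arguments. Here I would use that $\ta$ is closed, so that by \lemref{lem-3.1} the tensor $A$ is symmetric and satisfies $A(Py,Pz)=A(y,z)$, together with the structural relation \eqref{13}, which lets me rewrite every term of the form $A(\cdot,P\cdot)$ produced by $\psi_2$ in terms of $A(\cdot,\cdot)$ and the metric. After this contraction the identity should collapse to \eqref{18}. A further trace of \eqref{18} yields $\tr A=-\frac{\ta(\Om)}{2}$, and substituting this value back into \eqref{18} gives precisely \eqref{19}.

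For the converse I would assume \eqref{19}, that is $A=-\frac{\ta(\Om)}{4n}(g+\tg)$, and exploit the linearity of $\psi_1$ and $\psi_2$ in their tensor argument. Using $\psi_1(g)=2\pi_1$, $\psi_2(g)=2\pi_2$ and $\psi_1(\tg)=\psi_2(\tg)=\pi_3$, I obtain
\[
\psi_1(A)=-\frac{\ta(\Om)}{4n}(2\pi_1+\pi_3),\qquad
\psi_2(A)=-\frac{\ta(\Om)}{4n}(2\pi_2+\pi_3).
\]
Subtracting gives $(\psi_1-\psi_2)(A)=-\frac{\ta(\Om)}{2n}(\pi_1-\pi_2)$, which cancels the second summand on the right-hand side of \eqref{14}; hence that side is zero, $R$ satisfies \eqref{6}, and $R$ is a Riemannian $P$-tensor.

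The routine part is the converse, which is a short linear computation once $A$ is written through $g$ and $\tg$. The main obstacle is the contraction in the forward direction: the $\psi_2$-term entangles $A$ with the structure $P$, so one must invoke \eqref{13} to re-express the $A(\cdot,P\cdot)$ contributions and keep careful track of which slots are contracted, so that the $(0,4)$-identity really does reduce to the compact $(0,2)$-form \eqref{18} rather than to some weaker trace of it.
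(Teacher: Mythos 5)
Your proposal is correct and follows essentially the same route as the paper: both directions reduce to the vanishing of the right-hand side of \eqref{14}, the forward implication passing (via \eqref{13} and \lemref{lem-3.1}) to the contracted identity \eqref{18}, whose trace gives $\tr A=-\frac{\ta(\Om)}{2}$ and hence \eqref{19}, and the converse computing $\psi_1(A)$, $\psi_2(A)$ exactly as in the text. The only difference is that you spell out the contraction with $g^{ij}$ that the paper leaves implicit in the phrase ``according to \eqref{13}, it follows \eqref{18}''.
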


From \propref{prop-3.5} and \thmref{thm-3.8} we obtain immediately
the following
\begin{cor}\label{cor-3.9}
Let $(M,P,g)$ be a $\overline\W_3$-manifold with closed 1-form
$\ta$. Then $R$ is a Riemannian $P$-tensor if and only if $R=K$.
In this case, if $\dim M=4$ the tensor $R$ has the form
\begin{equation}\label{20}
    R=\frac{1}{8}\left\{\tau(\pi_1+\pi_2)+\tau^*\pi_3 \right\}.
\end{equation}
\end{cor}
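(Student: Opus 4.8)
The plan is to show that the two displayed statements of \corref{cor-3.9} follow directly from combining \propref{prop-3.5} with \thmref{thm-3.8}, with no new computation required beyond assembling the pieces. First I would prove the equivalence ``$R$ is a Riemannian $P$-tensor $\iff R=K$''. By \thmref{thm-3.8}, $R$ being a Riemannian $P$-tensor is equivalent to condition \eqref{19}. On the other hand, \propref{prop-3.5} gives the explicit form \eqref{17}, namely
\[
K=R-\frac{1}{4n}\left\{(\psi_1-\psi_2)(A)+\frac{\ta(\Om)}{2n}(\pi_1-\pi_2)\right\}.
\]
So $R=K$ holds if and only if the bracketed correction term vanishes, i.e. if and only if
\[
(\psi_1-\psi_2)(A)+\frac{\ta(\Om)}{2n}(\pi_1-\pi_2)=0,
\]
which is precisely the identity shown (in the paragraph preceding \thmref{thm-3.8}) to be equivalent to \eqref{19}. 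Chaining these equivalences yields the first claim. The main conceptual point — already done in the text — is that the obstruction measuring the failure of $R$ to be a $P$-tensor is exactly the correction term that distinguishes $R$ from $K$; so the equivalence is essentially a tautology once \eqref{17} is in hand.

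For the second claim, I would specialize to $\dim M=4$ and invoke \thmref{thm-3.7}, which already expresses $K$ in the form
\begin{equation*}
K=\frac{1}{8}\left\{\left[\tau+\Div\Om-\frac{\ta(\Om)}{4}\right](\pi_1+\pi_2)+\tau^*\pi_3\right\}.
\end{equation*}
Under the hypothesis that $R$ is a Riemannian $P$-tensor we have $R=K$ by the first part, so it suffices to simplify the scalar coefficient of $(\pi_1+\pi_2)$ to $\tau$. The expected mechanism is that condition \eqref{19} forces $\ta(\Om)$ and $\Div\Om$ to be linked so that the combination $\Div\Om-\frac{\ta(\Om)}{4}$ collapses; indeed from \eqref{19} one reads off $\tr A=-\frac{\ta(\Om)}{2}$ (as noted after \eqref{18}), and using \eqref{12} together with $\tr A=\Div\Om-\frac{1}{2n}\ta(\Om)$ with $2n=4$ gives a relation pinning down $\Div\Om$ in terms of $\ta(\Om)$. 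Substituting this into the coefficient is expected to cancel the $\Div\Om$ and $\ta(\Om)$ contributions, leaving exactly $\tau$ and hence \eqref{20}.

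The step I expect to require the most care is this final scalar bookkeeping: verifying that the coefficient $\tau+\Div\Om-\frac{\ta(\Om)}{4}$ genuinely reduces to $\tau$ under \eqref{19}. Everything else is a clean substitution, but here one must correctly track the trace relation coming from \eqref{12} in dimension $2n=4$ and confirm that $\Div\Om$ is determined by $\ta(\Om)$ in just the way needed for the cancellation. If the constants cooperate — which the structure of the statement strongly suggests — the proof is short; the only real risk is an arithmetic slip in the coefficients $\frac{1}{2n}$, $\frac{1}{4}$, and $\frac{1}{4n}$, so I would double-check these against \corref{cor-3.6} and \thmref{thm-3.7} before concluding.
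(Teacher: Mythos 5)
Your first equivalence is correct and is essentially the paper's own (implicit) argument: by \eqref{17}, $R=K$ holds iff the bracketed correction term vanishes, and the paragraph preceding \thmref{thm-3.8} shows that this vanishing is equivalent to \eqref{19}, which by \thmref{thm-3.8} characterizes $R$ being a Riemannian $P$-tensor. (Even more directly: by \eqref{14}, the correction term vanishes iff $R(x,y,Pz,Pw)=R(x,y,z,w)$, i.e.\ iff the curvature-like tensor $R$ satisfies \eqref{6}.)

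Your second step, however, contains a concrete problem: the cancellation you describe does not happen if you take \thmref{thm-3.7} at face value. Under \eqref{19} one has $\tr A=-\frac{\ta(\Om)}{2}$, and since $\tr A=\Div\Om-\frac{1}{2n}\ta(\Om)$ with $n=2$, this forces $\Div\Om=-\frac{\ta(\Om)}{4}$. Substituting into the printed coefficient $\tau+\Div\Om-\frac{\ta(\Om)}{4}$ yields $\tau-\frac{\ta(\Om)}{2}$, not $\tau$. The cancellation only works with the coefficient $\tau-\Div\Om-\frac{\ta(\Om)}{4}$, which is what \corref{cor-3.6} gives for $n=2$; in other words, \thmref{thm-3.7} as printed carries a sign slip on $\Div\Om$ and is inconsistent with \corref{cor-3.6}. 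Your planned double-check against \corref{cor-3.6} would indeed expose this, but as written your proof asserts a cancellation that fails. More importantly, the entire scalar bookkeeping is unnecessary: once $R=K$ is established, the scalar curvatures of $K$ and $R$ coincide trivially, $\tau(K)=\tau$ and $\tau^*(K)=\tau^*$, because they are the same tensor. So one simply applies \thmref{thm-2.1} directly to $R$, which is now a curvature-like Riemannian $P$-tensor on a $4$-dimensional manifold, and reads off \eqref{20} with no reference to $\Div\Om$ or $\ta(\Om)$ whatsoever. This is the route the paper's ``immediately'' intends, and it is also the one immune to the coefficient inconsistency you would otherwise have to resolve.
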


\section{Curvature properties of of $\overline\W_6$-manifolds}

Let $(M,P,g)$ is a $\overline\W_6$-manifold, i.e. \eqref{3} is
valid. Following the approach of the previous section, now we
consider problems for a $\overline\W_6$-manifold analogous to the
considered ones for a $\overline\W_3$-manifold.

The first equality of \eqref{3} can be rewritten in the form
\begin{equation*}
    \left(\n_yP\right)z=\frac{1}{2n}\left\{\left[g(y,z)-g(y,Pz)\right]\Om+(y-Py)\ta(z)\right\},
\end{equation*}
from where we obtain
\begin{equation*}
    \left(\n_y\ta\right)Pz=\left(\n_y\ta\right)z-\frac{\ta(\Om)}{2n}\left[g(y,z)-g(y,Pz)\right].
\end{equation*}

Let $A'$ be the tensor determined by
\begin{equation*}
    A'(y,z)=\left(\n_y\ta\right)z+\frac{1}{2n}\ta(y)\ta(z),
\end{equation*}
and $K$ is the tensor determined by \eqref{16}.

We establish the truthfulness of the following statements.

\begin{lem}\label{lem-4.1}
The following statements are equivalent for a
$\overline\W_6$-manifold $(M,P,g)$:
\begin{enumerate}
    \item[(i)] $\ta$ ia a closed 1-form;
    \item[(ii)] $A'(y,z)=A'(z,y)$;
    \item [(iii)]$A'(y,Pz)=A'(z,Py)$, $A'(Py,Pz)=A'(y,z)$.
\end{enumerate}
\end{lem}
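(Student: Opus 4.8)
The plan is to mirror the proof of \lemref{lem-3.1} exactly, since the $\overline{\W}_6$ setting is the sign-mirror of the $\overline{\W}_3$ one. First I would record the $\overline{\W}_6$-counterpart of \eqref{13}: starting from the definition $A'(y,z)=\left(\n_y\ta\right)z+\frac{1}{2n}\ta(y)\ta(z)$, I substitute $Pz$ for $z$ and use the displayed identity $\left(\n_y\ta\right)Pz=\left(\n_y\ta\right)z-\frac{\ta(\Om)}{2n}\left[g(y,z)-g(y,Pz)\right]$ together with the characteristic relation $\ta(Pz)=\ta(z)$ from \eqref{3}. Since the quadratic term then reads $\frac{1}{2n}\ta(y)\ta(Pz)=\frac{1}{2n}\ta(y)\ta(z)$, this should yield the key identity
\[
A'(y,Pz)=A'(y,z)-\frac{\ta(\Om)}{2n}\left[g(y,z)-g(y,Pz)\right],
\]
which carries $+A'(y,z)$ against the $-A(y,z)$ of \eqref{13}, reflecting the opposite sign of $\ta\circ P$.

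For (i)$\Leftrightarrow$(ii) I would subtract $A'(z,y)$ from $A'(y,z)$ directly from the definition. The quadratic term $\frac{1}{2n}\ta(y)\ta(z)$ is symmetric and cancels, leaving $A'(y,z)-A'(z,y)=\left(\n_y\ta\right)z-\left(\n_z\ta\right)y$. Hence symmetry of $A'$ is equivalent to $\left(\n_y\ta\right)z=\left(\n_z\ta\right)y$, which by the closedness criterion for $\ta$ recalled in the preliminaries is exactly $\D\ta=0$.

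For (ii)$\Leftrightarrow$(iii) the two metric compatibilities $g(y,Pz)=g(Py,z)=g(z,Py)$ and $g(Py,Pz)=g(y,z)$ are what I would lean on. Applying the key identity to both $A'(y,Pz)$ and $A'(z,Py)$ and subtracting, the $\ta(\Om)$-terms cancel because the two brackets agree under these compatibilities, giving $A'(y,Pz)-A'(z,Py)=A'(y,z)-A'(z,y)$; thus the first equation of (iii) is literally equivalent to (ii). For the second equation I would assume (ii), first derive the first-slot version $A'(Py,z)=A'(y,z)-\frac{\ta(\Om)}{2n}\left[g(y,z)-g(y,Pz)\right]$ by combining symmetry with the key identity, then apply the key identity once more with $y$ replaced by $Py$; the two $\ta(\Om)$-corrections then have opposite sign and cancel, so $A'(Py,Pz)=A'(y,z)$ drops out.

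The only genuine obstacle here is bookkeeping: the $\overline{\W}_6$ signs differ from the $\overline{\W}_3$ ones ($\ta(Px)=+\ta(x)$ and the $+$ sign in $A'$), so I would have to check carefully that the $\ta(\Om)$-correction terms cancel in the right places rather than reinforce. Once the key identity above is pinned down with the correct signs, the three equivalences are forced by the same algebra as in \lemref{lem-3.1}.
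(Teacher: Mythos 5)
Your proposal is correct and follows exactly the route the paper intends: the paper proves Lemma~\ref{lem-3.1} by deriving the key identity \eqref{13} from the definition of $A$ and the differentiated relation \eqref{9}, and for Lemma~\ref{lem-4.1} it simply asserts the analogous argument, which is precisely your derivation of $A'(y,Pz)=A'(y,z)-\frac{\ta(\Om)}{2n}\left[g(y,z)-g(y,Pz)\right]$ combined with the definition of $A'$. Your sign bookkeeping (the $+$ in $A'$ and $\ta(Px)=\ta(x)$) is right, and the cancellation of the $\ta(\Om)$-corrections in part (iii) works as you describe.
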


\begin{thm}\label{thm-4.2}
The following equality is valid for a $\overline\W_6$-manifold
$(M,P,g)$:
\begin{equation*}
\begin{split}
    &R(x,y,Pz,Pw)-R(x,y,z,w)=\\[4pt]
    &=\frac{1}{2n}\left\{(\psi_1-\psi_2)(A')(x,y,z,w)-\frac{\ta(\Om)}{2n}(\pi_1-\pi_2)(x,y,z,w)\right\}.
\end{split}
\end{equation*}
\end{thm}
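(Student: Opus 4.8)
The plan is to mirror exactly the derivation that produced \eqref{11} and \eqref{14} in the $\overline\W_3$ case, substituting the defining condition \eqref{3} for \eqref{2} throughout. First I would rewrite the first equality of \eqref{3} as the formula for $\left(\n_yP\right)z$ given at the start of this section, and note that differentiating $\tg(y,z)=g(y,Pz)$ still yields $\left(\n_x\tg\right)(y,z)=F(x,y,z)$ exactly as in \eqref{10}, since that computation does not use the class. The essential input is the Ricci-type identity \eqref{7}, which is valid on any Riemannian almost product manifold and is therefore available verbatim.

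Next I would differentiate the first equality of \eqref{3} and substitute into \eqref{7}, together with \eqref{10} and $\n g=0$. The only structural difference from the $\overline\W_3$ computation is that the factor $\left[g(\cdot,\cdot)+g(\cdot,P\cdot)\right]$ is everywhere replaced by $\left[g(\cdot,\cdot)-g(\cdot,P\cdot)\right]$, and the sign in the defining relation $\ta(Px)=\ta(x)$ for $\overline\W_6$ (rather than $\ta(Px)=-\ta(x)$) flips the sign of the $\ta(\Om)$ contribution. Collecting terms, bearing in mind \eqref{3} itself to reabsorb the $F$-terms, I expect to reach an analogue of \eqref{11} in which $A$ is replaced by $A'$ and the minus signs in the bracketed factors are understood. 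Then, substituting $Pw$ for $w$ and invoking \lemref{lem-4.1} (which plays the role of \lemref{lem-3.1}) to recognize the resulting expressions as $(\psi_1-\psi_2)(A')$ and $(\pi_1-\pi_2)$, I would assemble the claimed identity. The overall sign on the right-hand side and the sign in front of $\frac{\ta(\Om)}{2n}$ come out opposite to those in \eqref{14}, which is precisely what the statement asserts.

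The main obstacle, such as it is, will be bookkeeping: verifying that the sign changes induced by passing from $g(x,Py)$ with a plus sign to a minus sign, combined with the opposite sign of $\ta\circ P$, propagate consistently through the differentiation and produce exactly the displayed signs rather than accidentally cancelling. Concretely, I would need to check the analogue of \eqref{13}, namely the relation $A'(y,Pz)=A'(y,z)-\frac{\ta(\Om)}{2n}\left[g(y,z)-g(y,Pz)\right]$, since it is this identity (used when substituting $Pw$ for $w$) that fixes the final coefficients. Everything else is a direct transcription of the $\overline\W_3$ argument, so once that sign ledger is confirmed the result follows by the same steps that gave \thmref{thm-3.2}.
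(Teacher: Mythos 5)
Your proposal is correct and follows essentially the same route as the paper, which itself proves \thmref{thm-4.2} only implicitly by declaring that the statements of Section 3 are established "following the approach of the previous section": differentiate the first equality of \eqref{3}, substitute into the Ricci-type identity \eqref{7} using \eqref{10}, obtain the $\overline\W_6$-analogue of \eqref{11} with $A'$ in place of $A$, and then substitute $Pw$ for $w$ using the analogue of \eqref{13}. Your sign ledger checks out; in particular the key relation you flag, $A'(y,Pz)=A'(y,z)-\frac{\ta(\Om)}{2n}\left[g(y,z)-g(y,Pz)\right]$, is exactly what follows from $\ta(Pz)=\ta(z)$ and the $\overline\W_6$ formula for $\left(\n_y\ta\right)Pz$, and it produces the stated flips of the overall sign and of the sign in front of $\frac{\ta(\Om)}{2n}$ relative to \eqref{14}.
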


\begin{thm}\label{thm-4.3}
The 1-form $\ta$ is closed on a $\overline\W_6$-manifold $(M,P,g)$
if and only if the equality \eqref{15} is valid.
\end{thm}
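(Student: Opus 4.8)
The plan is to follow the argument used for \thmref{thm-3.3}, with $A'$ in place of $A$ and the signs dictated by \eqref{3}. Starting from the identity of \thmref{thm-4.2}, I would apply the cyclic-sum operator $\sx$. The curvature tensor $R$ obeys the first Bianchi identity, that is \eqref{5} with $L=R$, so $\sx R(x,y,z,w)=0$; and because $\pi_1$, $\pi_2$ are curvature-like, \eqref{5} also gives $\sx\pi_1=\sx\pi_2=0$. Consequently the $(\pi_1-\pi_2)$-term disappears and \thmref{thm-4.2} reduces to
\begin{equation*}
\sx R(x,y,Pz,Pw)=\frac{1}{2n}\,\sx(\psi_1-\psi_2)(A')(x,y,z,w).
\end{equation*}
Hence \eqref{15} holds precisely when $\sx(\psi_1-\psi_2)(A')$ vanishes, and the theorem is reduced to matching this vanishing with the closedness of $\ta$.

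For the implication from closedness to \eqref{15} I would use \lemref{lem-4.1}: a closed $\ta$ forces $A'(y,z)=A'(z,y)$ together with $A'(y,Pz)=A'(z,Py)$. By the two curvature-like criteria recalled in Section~1, these are exactly the conditions under which $\psi_1(A')$ and $\psi_2(A')$ satisfy \eqref{5}; both cyclic sums then vanish, the right-hand side above is zero, and \eqref{15} follows.

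The converse is where I expect the real work. Granting \eqref{15}, one has $\sx(\psi_1-\psi_2)(A')=0$, and I would make it explicit: expanding $\psi_1$ and $\psi_2$ and inserting the differentiated form of \eqref{3} (the $\overline\W_6$-analogue of \eqref{13}), namely $A'(y,Pz)-A'(z,Py)=A'(y,z)-A'(z,y)$, the cyclic sum collapses to $g\bigl((\id-P)V,w\bigr)$, where $V$ is the cyclic combination of $x,y,z$ with coefficients $A'(y,z)-A'(z,y)$, $A'(z,x)-A'(x,z)$, $A'(x,y)-A'(y,x)$. Thus \eqref{15} is equivalent to $(\id-P)V=0$, i.e. $V\in\ker(\id-P)$ for all $x,y,z$, and I would then split along $T_cM=\ker(\id-P)\oplus\ker(\id+P)$ to deduce $A'(y,z)=A'(z,y)$ and conclude with \lemref{lem-4.1}. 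The delicate point is that the condition $V\in\ker(\id-P)$ pins down the antisymmetric part of $A'$ on $\ker(\id+P)$ and on mixed pairs, but says nothing about its part on $\ker(\id-P)$ --- precisely the eigenspace carrying $\ta$ by \eqref{3}. Recovering that last component is the main obstacle, and it is where I would have to bring in extra information beyond \eqref{15} (and, in the smallest dimensions, the separate $4$-dimensional analysis resting on \thmref{thm-2.1}).
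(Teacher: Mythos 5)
Your reduction of \eqref{15} to the vanishing of $\sx(\psi_1-\psi_2)(A')$, and your proof of the direction ``$\ta$ closed $\Rightarrow$ \eqref{15}'', are correct and coincide with the paper's argument (\thmref{thm-4.2} combined with \lemref{lem-4.1} and the curvature-like criteria of Section 1). The converse is where your proposal stops, and the obstacle you describe does not actually exist: your eigenspace analysis is backwards. Write $B'(y,z)=A'(y,z)-A'(z,y)$. Your own identity says that \eqref{15} is equivalent to
\[
B'(y,z)(x-Px)+B'(z,x)(y-Py)+B'(x,y)(z-Pz)=0
\]
for all $x,y,z$. Take $x$ with $Px=-x$, $x\neq0$ (the $(-1)$-eigenspace is $n$-dimensional because $\tr P=0$) and $y,z$ with $Py=y$, $Pz=z$: the display collapses to $2B'(y,z)x=0$. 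So \eqref{15} \emph{does} determine the antisymmetric part of $A'$ on $\ker(\id-P)$ --- the eigenspace carrying $\ta$ --- contrary to your claim. The component that \eqref{15} genuinely fails to control (for $n=2$: a cyclic combination of three vectors in the $2$-dimensional space $\ker(\id+P)$ vanishes identically) is the part of $B'$ on $\ker(\id+P)$.

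That missing component, however, costs nothing, because you discarded too much when you kept only the antisymmetrized form of the $\overline\W_6$-analogue of \eqref{13}. The full relation, obtained in the paper by differentiating $\ta(Pz)=\ta(z)$, is
\[
A'(y,Pz)=A'(y,z)-\frac{\ta(\Om)}{2n}\left[g(y,z)-g(y,Pz)\right],
\]
and putting $Pz=-z$ in it yields $A'(y,z)=\frac{\ta(\Om)}{2n}\,g(y,z)$ for every $y$ and every $z\in\ker(\id+P)$; in particular $B'$ vanishes on $\ker(\id+P)\times\ker(\id+P)$. For mixed pairs ($Py=y$, $Pz=-z$), choose $x$ with $Px=-x$ linearly independent of $z$ (possible for $n\geq2$); the first display becomes $2B'(y,z)x+2B'(x,y)z=0$, whence $B'(y,z)=0$. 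Altogether $A'$ is symmetric, and \lemref{lem-4.1} gives that $\ta$ is closed. Thus the converse closes using only \eqref{3} and \eqref{15}, exactly the mechanism the paper invokes (tersely) via \lemref{lem-4.1} and \thmref{thm-4.2}; no information beyond \eqref{15} is needed, and \thmref{thm-2.1} (a statement about $4$-dimensional Riemannian $P$-tensors) plays no role in this theorem.
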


\begin{cor}\label{cor-4.4}
If the 1-form $\ta$ is closed on a $\overline\W_6$-manifold
$(M,P,g)$ then the equalities \eqref{15'} are valid.
\end{cor}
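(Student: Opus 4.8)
The plan is to exploit the exact parallel with the $\overline\W_3$ case and to reduce everything to the two facts already available in this section: \thmref{thm-4.3}, which says that closedness of $\ta$ is equivalent to the first Bianchi-type identity \eqref{15}, and \thmref{thm-4.2}, which expresses the $P$-twist of $R$ in the last pair of slots as a correction tensor
\[
T'=\tfrac{1}{2n}\left\{(\psi_1-\psi_2)(A')-\tfrac{\ta(\Om)}{2n}(\pi_1-\pi_2)\right\},
\]
so that $R(x,y,Pz,Pw)=R(x,y,z,w)+T'(x,y,z,w)$.

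First I would treat the first equality of \eqref{15'}. Since $\ta$ is closed, \eqref{15} holds, and then the tensor $K$ from \eqref{16} is a Riemannian $P$-tensor: properties \eqref{4} and \eqref{6} are immediate from $P^2=\id$, while the Bianchi property \eqref{5} is exactly \eqref{15}; this is the $\overline\W_6$-analogue of \propref{prop-3.5}. I would then record the elementary fact that every Riemannian $P$-tensor $L$ is invariant under the simultaneous action of $P$ on all four arguments, $L(Px,Py,Pz,Pw)=L(x,y,z,w)$, which follows by combining \eqref{6} with the pair symmetry $L(x,y,z,w)=L(z,w,x,y)$ that \eqref{4} and \eqref{5} force. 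In particular $K$ has this full $P$-invariance.

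Next I would transfer this invariance from $K$ to $R$. Rewriting \thmref{thm-4.2} as $K=R+\tfrac12 T'$, it suffices to show that $T'$ is itself fully $P$-invariant. By \lemref{lem-4.1} the closedness of $\ta$ makes $A'$ symmetric and $P$-compatible, so $\psi_1(A')$ and $\psi_2(A')$ are curvature-like; hence $T'$ is curvature-like and in particular pair-symmetric. Moreover, the relations $\psi_2(S)(x,y,z,w)=\psi_1(S)(x,y,Pz,Pw)$ and $\pi_2(x,y,z,w)=\pi_1(x,y,Pz,Pw)$ together with $P^2=\id$ give $T'(x,y,Pz,Pw)=-T'(x,y,z,w)$; combining this anti-invariance in the last pair with pair symmetry yields anti-invariance in the first pair as well, whence $T'(Px,Py,Pz,Pw)=T'(x,y,z,w)$. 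Thus both $K$ and $T'$ are fully $P$-invariant, and so is $R=K-\tfrac12 T'$, which is precisely $R(Px,Py,Pz,Pw)=R(x,y,z,w)$.

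Finally, the Ricci identity $\rho(Py,Pz)=\rho(y,z)$ I would obtain by contracting the curvature identity over the first and fourth slots: writing $e_i=P(Pe_i)$ and using the full $P$-invariance just proved gives $\rho(Py,Pz)=g^{ij}R(Pe_i,y,z,Pe_j)$, and since $P$ is $g$-orthogonal and self-adjoint with $P^2=\id$ the trace is unchanged, so that $g^{ij}R(Pe_i,y,z,Pe_j)=g^{ij}R(e_i,y,z,e_j)=\rho(y,z)$. The only genuinely delicate point is the bookkeeping in the third paragraph: one must be sure that $T'$ is curvature-like before invoking pair symmetry (this is exactly where \lemref{lem-4.1} is needed), and one must track correctly the sign changes produced by the $P$-twist built into $\psi_2$ and $\pi_2$; everything else is formal.
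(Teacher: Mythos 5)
Your proof is correct, but it is organized differently from (and more heavily than) the argument the paper intends. The paper states \corref{cor-4.4} without proof, as an immediate consequence of \thmref{thm-4.3} (exactly as \corref{cor-3.4} follows from \thmref{thm-3.3}), and that direct argument is: once $\ta$ is closed, \eqref{15} holds, and \eqref{15} says precisely that the tensor $\widehat{R}(x,y,z,w):=R(x,y,Pz,Pw)$ satisfies the Bianchi property \eqref{5}; since $\widehat{R}$ obviously inherits the antisymmetries \eqref{4} from $R$, it is curvature-like, hence enjoys the pair symmetry $\widehat{R}(x,y,z,w)=\widehat{R}(z,w,x,y)$, and therefore
\[
R(Px,Py,Pz,Pw)=\widehat{R}(Px,Py,z,w)=\widehat{R}(z,w,Px,Py)=R(z,w,x,y)=R(x,y,z,w),
\]
with the Ricci identity then following by contraction exactly as in your final paragraph. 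Your route instead writes $R=K-\tfrac12 T'$ and proves full $P$-invariance of each summand separately: of $K$ by showing it is a Riemannian $P$-tensor (re-deriving \propref{prop-4.5} along the way), and of $T'$ by the sign bookkeeping with the $\psi_1$/$\psi_2$ and $\pi_1$/$\pi_2$ twisting identities; that bookkeeping (anti-invariance of $T'$ in the last pair, transferred to the first pair by pair symmetry, hence invariance under $P$ on all four slots) is accurate, and you correctly flag that \lemref{lem-4.1} is what makes $T'$ curvature-like so that pair symmetry may be invoked. Both arguments ultimately rest on the same algebraic fact---pair symmetry of curvature-like tensors---so the difference is one of economy rather than substance: the direct route needs only \thmref{thm-4.3}, while yours additionally requires \thmref{thm-4.2} and \lemref{lem-4.1} but yields \propref{prop-4.5} as a by-product and makes explicit where closedness of $\ta$ enters.
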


\begin{prop}\label{prop-4.5}
If $(M,P,g)$ is a $\overline\W_6$-manifold with closed 1-form
$\ta$ then $K$ is a Riemannian $P$-tensor and it has the form
\begin{equation*}
    K=R+\frac{1}{4n}\left\{(\psi_1-\psi_2)(A')-\frac{\ta(\Om)}{2n}(\pi_1-\pi_2)\right\}.
\end{equation*}
\end{prop}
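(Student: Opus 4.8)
The plan is to follow exactly the pattern established in Section~2 for $\overline\W_3$-manifolds, since Proposition~\ref{prop-4.5} is the direct analogue of \propref{prop-3.5}. The key point is that the $\overline\W_6$ case differs from the $\overline\W_3$ case only by systematic sign changes: the factor $g(y,z)+g(y,Pz)$ is replaced by $g(y,z)-g(y,Pz)$, the tensor $A$ is replaced by $A'$ (with a plus sign in front of the $\frac{1}{2n}\ta(y)\ta(z)$ term), and the condition $\ta(Px)=-\ta(x)$ becomes $\ta(Px)=\ta(x)$. I would first verify that \thmref{thm-4.2} holds; its proof mirrors the derivation of \eqref{14}, substituting $Pw$ for $w$ in the $\overline\W_6$-analogue of \eqref{11} and using \lemref{lem-4.1}. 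I may assume this result as it is stated earlier in the excerpt.

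With \thmref{thm-4.2} in hand, the proof of Proposition~\ref{prop-4.5} proceeds in two short steps. First I would check that $K$ is a Riemannian $P$-tensor. By its definition \eqref{16}, the tensor $K$ automatically satisfies the antisymmetry properties \eqref{4} and the $P$-invariance property \eqref{6}, exactly as observed in the proof of \propref{prop-3.5}; these hold for formal reasons independent of which Naveira class we are in. The only non-trivial property is the first Bianchi identity \eqref{5}, and this is guaranteed precisely when $\ta$ is closed, by \thmref{thm-4.3} (which asserts that closedness of $\ta$ is equivalent to \eqref{15}). Since we assume $\ta$ is closed, \eqref{15} holds, and hence $K$ satisfies \eqref{5}. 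Therefore $K$ is a Riemannian $P$-tensor.

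Second, I would derive the explicit formula for $K$. From the definition \eqref{16} we have
\[
K = R + \tfrac{1}{2}\bigl\{R(x,y,Pz,Pw) - R(x,y,z,w)\bigr\},
\]
and substituting the right-hand side of the displayed equality in \thmref{thm-4.2} for the bracketed difference yields
\[
K = R + \frac{1}{4n}\left\{(\psi_1-\psi_2)(A') - \frac{\ta(\Om)}{2n}(\pi_1-\pi_2)\right\},
\]
which is the asserted form. I expect no genuine obstacle here: the entire argument is a sign-adjusted transcription of the $\overline\W_3$ case, and the only place where care is required is ensuring that the signs in the definition of $A'$ and in \thmref{thm-4.2} are threaded through correctly so that the final $+\frac{1}{4n}$ (rather than $-\frac{1}{4n}$) emerges. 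The structural content—that closedness of $\ta$ delivers the Bianchi identity and hence the $P$-tensor property—is already secured by \thmref{thm-4.3}.
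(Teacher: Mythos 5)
Your proposal is correct and follows exactly the paper's own route: the paper proves \propref{prop-4.5} implicitly by the same argument as \propref{prop-3.5}, namely observing that properties \eqref{4} and \eqref{6} hold formally for $K$ by its definition \eqref{16}, that \eqref{5} follows from \eqref{15} (equivalent to closedness of $\ta$ by \thmref{thm-4.3}), and that the explicit formula is an immediate substitution of \thmref{thm-4.2} into \eqref{16}. No discrepancies to report.
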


\begin{cor}\label{cor-4.6}
If $(M,P,g)$ is a $\overline\W_6$-manifold with closed 1-form
$\ta$ then for the Ricci tensor and the scalar curvatures of $K$
the following formulae are valid:
\[
\begin{split}
\rho(K)(y,z)&=\rho(y,z)\\[4pt]
&+\frac{1}{4n}\left\{\left[\tr
A'-\ta(\Om)\right]\left[g(y,z)-g(y,Pz)\right]+2nA'(y,z)\right\},
\end{split}
\]
\[
\tau(K)=\tau+\Div\Om-\frac{n-1}{2n}\ta(\Om),\qquad
\tau^*(K)=\tau^*.
\]
\end{cor}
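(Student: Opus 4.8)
The plan is to mimic the proof of \corref{cor-3.6} by taking the successive traces of the identity for $K$ supplied by \propref{prop-4.5}. Since the operators $\rho(\cdot)$, $\tau(\cdot)$ and $\tau^*(\cdot)$ are linear in their curvature-like argument, I would contract
\[
K=R+\frac{1}{4n}\left\{(\psi_1-\psi_2)(A')-\frac{\ta(\Om)}{2n}(\pi_1-\pi_2)\right\}
\]
by $g^{ij}$ in the outer pair of slots and reduce the computation of $\rho(K)(y,z)=g^{ij}K(e_i,y,z,e_j)$ to the four elementary contractions $\rho(\psi_1(A'))$, $\rho(\psi_2(A'))$, $\rho(\pi_1)$ and $\rho(\pi_2)$.

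First I would record the needed contraction formulae. Because $\ta$ is closed, \lemref{lem-4.1} gives that $A'$ is symmetric and satisfies $A'(Py,Pz)=A'(y,z)$, whence
\[
\rho(\psi_1(A'))(y,z)=(\tr A')g(y,z)+(2n-2)A'(y,z),
\]
\[
\rho(\psi_2(A'))(y,z)=(\tr^* A')\,g(y,Pz)-2A'(y,z),
\]
where $\tr^* A':=g^{ij}A'(e_i,Pe_j)$; the second formula uses $P^2=\id$ and $A'(Py,Pz)=A'(y,z)$ to collapse two of the four contracted terms, while the term carrying $g(e_i,Pe_j)$ drops out because $\tr P=0$. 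Likewise $\rho(\pi_1)=(2n-1)g$ and $\rho(\pi_2)=-g$, the latter again using $\tr P=0$.

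The decisive step is to express the two traces of $A'$ through $\Om$. From $A'(y,z)=(\n_y\ta)z+\frac{1}{2n}\ta(y)\ta(z)$ one gets at once $\tr A'=\Div\Om+\frac{1}{2n}\ta(\Om)$. For $\tr^* A'$ I would substitute the differentiated structure identity $(\n_y\ta)Pz=(\n_y\ta)z-\frac{\ta(\Om)}{2n}\left[g(y,z)-g(y,Pz)\right]$ together with $\ta(Px)=\ta(x)$, both valid on a $\overline\W_6$-manifold; contracting these yields $\tr^* A'=\Div\Om-\frac{2n-1}{2n}\ta(\Om)$, and hence the key relation $\tr^* A'=\tr A'-\ta(\Om)$. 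This bookkeeping with the $g(\cdot,P\cdot)$-terms and the associated trace is the only nonformal input, and it is where I expect the main difficulty to lie.

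Substituting these data into $\rho(K)=\rho+\frac{1}{4n}\left[\rho(\psi_1(A'))-\rho(\psi_2(A'))-\frac{\ta(\Om)}{2n}(\rho(\pi_1)-\rho(\pi_2))\right]$ and using $\rho(\pi_1)-\rho(\pi_2)=2n\,g$, the $g(y,z)$- and $g(y,Pz)$-coefficients combine, via $\tr^* A'=\tr A'-\ta(\Om)$, into $(\tr A'-\ta(\Om))\left[g(y,z)-g(y,Pz)\right]$, giving the stated expression for $\rho(K)$. Finally I would trace $\rho(K)$ once more: contracting by $g^{ij}$ and inserting $\tr A'=\Div\Om+\frac{1}{2n}\ta(\Om)$ produces $\tau(K)=\tau+\Div\Om-\frac{n-1}{2n}\ta(\Om)$, while contracting the second slot against $Pe_j$ (using $P^2=\id$ and once more $\tr^* A'=\tr A'-\ta(\Om)$) makes the two contributions cancel, so that $\tau^*(K)=\tau^*$.
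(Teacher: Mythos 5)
Your proposal is correct and matches the paper's (implicit) argument: the paper obtains \corref{cor-4.6} exactly by contracting the expression for $K$ in \propref{prop-4.5} with $g^{ij}$, using the symmetry and $P$-invariance of $A'$ from \lemref{lem-4.1} and the traces $\tr A'=\Div\Om+\frac{1}{2n}\ta(\Om)$, $\tr^* A'=\tr A'-\ta(\Om)$, just as you do. Your contraction formulae for $\psi_1(A')$, $\psi_2(A')$, $\pi_1$, $\pi_2$ and the final assembly all check out, including the cancellation giving $\tau^*(K)=\tau^*$.
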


\begin{thm}\label{thm-4.7}
If $(M,P,g)$ is a 4-dimensional $\overline\W_6$-manifold with
closed 1-form $\ta$ then the tensor $K$ has the form \eqref{17'}.
\end{thm}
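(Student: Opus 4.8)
The plan is to imitate the derivation of \thmref{thm-3.7}: all the substantive work is already done in the preceding results, so the argument reduces to a short specialization. First I would invoke \propref{prop-4.5}, which guarantees that on a $\overline\W_6$-manifold with closed $\ta$ the tensor $K$ is a Riemannian $P$-tensor; in particular $K$ is curvature-like and satisfies \eqref{6}. Since we now assume $\dim M=4$, the dimension-four structure result \thmref{thm-2.1} applies verbatim to $L=K$, giving
\[
    K=\frac{1}{8}\left\{\tau(K)(\pi_1+\pi_2)+\tau^*(K)\pi_3\right\}.
\]

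The remaining step is purely to insert the scalar curvatures of $K$ computed in \corref{cor-4.6}. There one has $\tau^*(K)=\tau^*$ and $\tau(K)=\tau+\Div\Om-\frac{n-1}{2n}\ta(\Om)$. Specializing to $2n=\dim M=4$, i.e. $n=2$, the coefficient collapses to $\frac{n-1}{2n}=\frac14$, so that $\tau(K)=\tau+\Div\Om-\frac{\ta(\Om)}{4}$. Substituting these two values into the displayed formula reproduces exactly the right-hand side of \eqref{17'}, which is the asserted form of $K$.

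The single place where care is needed is the bookkeeping of $n$: \corref{cor-4.6} is stated for general $2n$, and one must correctly set $n=2$ so that $\frac{n-1}{2n}$ becomes $\frac14$, matching the constant in \eqref{17'}. No genuine obstacle remains, because the two nontrivial ingredients — that $K$ is a Riemannian $P$-tensor, and that its scalar invariants take the stated values — are already supplied by \propref{prop-4.5} and \corref{cor-4.6}, whose proofs parallel the $\overline\W_3$ case through \thmref{thm-4.2} and \lemref{lem-4.1}. Thus the entire content of \thmref{thm-4.7} is that in dimension four a Riemannian $P$-tensor is determined by its two scalar invariants $\tau(\cdot)$ and $\tau^*(\cdot)$, and $K$ realizes this with the invariants obtained above.
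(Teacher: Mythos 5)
Your proposal is correct and takes essentially the same route as the paper: \thmref{thm-4.7} is obtained, just as \thmref{thm-3.7} was, by combining the $P$-tensor property from \propref{prop-4.5} with the 4-dimensional structure result \thmref{thm-2.1} and the scalar curvatures from \corref{cor-4.6}. Your bookkeeping is also right — with $n=2$ the coefficient $\frac{n-1}{2n}$ becomes $\frac14$, so $\tau(K)=\tau+\Div\Om-\frac{\ta(\Om)}{4}$ and $\tau^*(K)=\tau^*$, which substituted into \thmref{thm-2.1} give exactly \eqref{17'}.
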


\begin{thm}\label{thm-4.8}
Let $(M,P,g)$ be a $\overline\W_6$-manifold with closed 1-form
$\ta$. Then $R$ is a Riemannian $P$-tensor if and only if the
following  condition is valid:
\begin{equation*}
    A'(y,z)=\frac{\ta(\Om)}{4n}\left[g(y,z)-g(y,Pz)\right].
\end{equation*}
\end{thm}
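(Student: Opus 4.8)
The plan is to transcribe into the $\overline\W_6$-setting the computation carried out in the text just before \thmref{thm-3.8}, using \thmref{thm-4.2} in place of \eqref{14} and the tensor $A'$ in place of $A$. I treat the two implications separately.

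For the necessity I would assume that $R$ is a Riemannian $P$-tensor. Then property \eqref{6} makes the left-hand side of the identity in \thmref{thm-4.2} vanish, so that
\[
(\psi_1-\psi_2)(A')-\frac{\ta(\Om)}{2n}(\pi_1-\pi_2)=0
\]
holds as a $(0,4)$-tensor. The essential step is to reduce this to a $(0,2)$-identity on $A'$. I would do this by contracting the displayed identity with $g^{xw}$ over its first and fourth arguments, exactly as one passes from the four-tensor identity to \eqref{18} in the $\overline\W_3$-case. In performing the contraction I use $\tr P=0$ (so that $g^{xw}[g(x,w)-g(x,Pw)]=2n$ while the $P$-part drops out), the $\overline\W_6$-analogue of \eqref{13}, namely $A'(y,Pz)=A'(y,z)-\frac{\ta(\Om)}{2n}[g(y,z)-g(y,Pz)]$, which follows from the definition of $A'$ together with the differentiated relation $(\n_y\ta)Pz=(\n_y\ta)z-\frac{\ta(\Om)}{2n}[g(y,z)-g(y,Pz)]$ and $\ta(Pz)=\ta(z)$, and finally the symmetry statements of \lemref{lem-4.1}. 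The outcome is precisely the bracketed expression already occurring in \corref{cor-4.6}, set equal to zero:
\[
[\tr A'-\ta(\Om)][g(y,z)-g(y,Pz)]+2nA'(y,z)=0.
\]
A further contraction with $g^{yz}$, again using $\tr P=0$, yields $\tr A'=\frac{\ta(\Om)}{2}$; substituting this value back isolates the asserted formula $A'(y,z)=\frac{\ta(\Om)}{4n}[g(y,z)-g(y,Pz)]$.

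For the sufficiency I would assume the stated formula and verify \eqref{6} directly. Writing $A'=\frac{\ta(\Om)}{4n}(g-\tg)$ with $\tg(y,z)=g(y,Pz)$ and recalling $\pi_1=\frac12\psi_1(g)$, $\pi_2=\frac12\psi_2(g)$, $\pi_3=\psi_1(\tg)=\psi_2(\tg)$, I obtain $\psi_1(A')=\frac{\ta(\Om)}{4n}(2\pi_1-\pi_3)$ and $\psi_2(A')=\frac{\ta(\Om)}{4n}(2\pi_2-\pi_3)$, whence $(\psi_1-\psi_2)(A')=\frac{\ta(\Om)}{2n}(\pi_1-\pi_2)$. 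Inserting this into the identity of \thmref{thm-4.2} makes its right-hand side vanish, so $R(x,y,Pz,Pw)=R(x,y,z,w)$, which is \eqref{6}; since $R$ already satisfies \eqref{4} and \eqref{5}, it is a Riemannian $P$-tensor.

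I expect the contraction in the necessity part to be the only delicate point, essentially because of the sign bookkeeping special to $\overline\W_6$: the combination $g-\tg$ is now anti-invariant under $z\mapsto Pz$, and the relation $\ta(Pz)=\ta(z)$ replaces $\ta(Pz)=-\ta(z)$ of the $\overline\W_3$-case, so these signs must be tracked carefully in order that the coefficient of $[g(y,z)-g(y,Pz)]$ emerge as $\tr A'-\ta(\Om)$ rather than with the wrong sign. Once this matches \corref{cor-4.6}, the remaining trace and back-substitution are immediate.
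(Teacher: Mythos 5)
Your proof is correct and takes essentially the same route as the paper: the paper establishes \thmref{thm-4.8} by transcribing the argument given before \thmref{thm-3.8} into the $\overline\W_6$-setting, which is precisely your scheme of contracting the identity of \thmref{thm-4.2} (via the $\overline\W_6$-analogue of \eqref{13} and \lemref{lem-4.1}) to the $(0,2)$-identity of \corref{cor-4.6}, tracing to get $\tr A'=\frac{\ta(\Om)}{2}$, back-substituting, and then verifying sufficiency through $\psi_1(A')$ and $\psi_2(A')$. All your sign bookkeeping checks out.
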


\begin{cor}\label{cor-4.9}
Let $(M,P,g)$ be a $\overline\W_6$-manifold with closed 1-form
$\ta$. Then $R$ is a Riemannian $P$-tensor if and only if $R=K$.
In this case, if $\dim M=4$ the tensor $R$ has the form
\eqref{20}.
\end{cor}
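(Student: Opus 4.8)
The plan is to obtain this corollary as the $\overline\W_6$-analogue of \corref{cor-3.9}, deriving it directly from \propref{prop-4.5} and \thmref{thm-4.8}, and then appealing to \thmref{thm-2.1} in the four-dimensional case.

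First I would prove the equivalence ``$R$ is a Riemannian $P$-tensor $\Leftrightarrow$ $R=K$''. The backward implication is immediate: by \propref{prop-4.5} the tensor $K$ defined in \eqref{16} is always a Riemannian $P$-tensor once $\ta$ is closed, so $R=K$ forces $R$ to share this property. For the forward implication I would assume $R$ is a Riemannian $P$-tensor and invoke \thmref{thm-4.8} to write $A'=\frac{\ta(\Om)}{4n}(g-\tg)$ as a $(0,2)$-tensor. Substituting this into the bracket of the formula in \propref{prop-4.5} and using linearity of $\psi_1,\psi_2$ together with $\psi_1(g)=2\pi_1$, $\psi_2(g)=2\pi_2$ and $\psi_1(\tg)=\psi_2(\tg)=\pi_3$, I would compute $(\psi_1-\psi_2)(A')=\frac{\ta(\Om)}{2n}(\pi_1-\pi_2)$, so that the correction term cancels and $R=K$ follows.

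For the four-dimensional refinement I would note that, $R=K$ having been established, $R$ is now itself a Riemannian $P$-tensor, and $R$ is of course curvature-like. Hence \thmref{thm-2.1} applies verbatim to $R$ on the $4$-dimensional manifold and gives $R=\frac{1}{8}\{\tau(R)(\pi_1+\pi_2)+\tau^*(R)\pi_3\}$; since $\tau(R)=\tau$ and $\tau^*(R)=\tau^*$ by definition, this is precisely \eqref{20}. The computations are routine, the only delicate point being the evaluation $(\psi_1-\psi_2)(A')=\frac{\ta(\Om)}{2n}(\pi_1-\pi_2)$, where the linearity of $\psi_1,\psi_2$ and their relation to $\pi_1,\pi_2,\pi_3$ must be applied correctly; I anticipate no genuine obstacle, since every ingredient mirrors the already settled $\overline\W_3$ case.
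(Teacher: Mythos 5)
Your proposal is correct and follows essentially the same route as the paper, which obtains \corref{cor-4.9} (mirroring \corref{cor-3.9}) from \propref{prop-4.5} and \thmref{thm-4.8}, with the cancellation $(\psi_1-\psi_2)(A')=\frac{\ta(\Om)}{2n}(\pi_1-\pi_2)$ being exactly the computation the paper carries out (in its $\overline\W_3$ form) just before \thmref{thm-3.8}, and with \thmref{thm-2.1} giving the four-dimensional form \eqref{20}. The only remark worth making is that the forward implication also follows instantly from the definition \eqref{16}: if $R(x,y,Pz,Pw)=R(x,y,z,w)$, then $K=\frac{1}{2}(R+R)=R$ without any computation.
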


\bigskip

\small{ \noindent
\textsl{D. Gribacheva\\
Department of Algebra and Geometry\\
Faculty of Mathematics and Informatics\\
University of Plovdiv\\
236 Bulgaria Blvd\\
4003 Plovdiv, Bulgaria}
\\
\texttt{dobrinka@uni-plovdiv.bg} }


\begin{thebibliography}{99}

\bibitem{GrHe}
A. Gray, L. Hervella. \emph{The sixteen classes of almost
Hermitian manifolds and their linear invariants}, Ann. Mat. Pura
Appl., vol. 123 (1980), 35--58.

\bibitem{DobrMek}
D. Gribacheva, D. Mekerov. \emph{Conformal Riemannian
$P$-manifolds with connections whose curvature tensors are
Riemannian $P$-tensors}, arXiv:1203.4682.

\bibitem{Mek}
D. Mekerov. \emph{On Riemannian almost product manifolds with
nonintegrable structure}, J. Geom., vol. 89 (2008), no. 1-2,
119--129.

\bibitem{Na}
A. M. Naveira, \emph{A classification of Riemannian almost product
manifolds}, Rend. Math., vol. 3 (1983), 577--592.

\bibitem{Sta}
M. Staikova. \emph{Curvature properties of Riemannian
P-manifolds}, Plovdiv Univ. Sci. Works -- Math., vol. 32 (1987),
no. 3, 241--251.

\bibitem{StaGri}
M. Staikova, K. Gribachev. \emph{Canonical connections and their
conformal invariants on Riemannian $P$-manifolds}, Serdica Math.
P., vol. 18 (1992), 150--161.

\bibitem{Ya}
K. Yano. \emph{Differential geometry on complex and almost complex
spaces}, Pure and Applied Math., vol. 49, New York, Pergamon Press
Book, 1965.

\end{thebibliography}
\end{document}